\newtheorem{theorem}{Theorem}[section]
\newtheorem{proposition}[theorem]{Proposition}
\newtheorem{corollary}[theorem]{Corollary}
\newtheorem{lemma}[theorem]{Lemma}
\theoremstyle{definition}
\newtheorem{definition}[theorem]{Definition}
\newtheorem{example}[theorem]{Example}
\newtheorem{remark}[theorem]{Remark}
\begin{document}
\title[$p$-regular operators and weights]{$p$-regularity and weights for operators between $L^p$-spaces}

\subjclass[2010]{46E30,46B42,47B10}

\keywords{Banach function space; $p$-regular operator; weighted $p$-estimate}

\author{E. A. S\'{a}nchez P\'{e}rez}
\address{E. A. S\'{a}nchez P\'{e}rez\\ Instituto Universitario de Matem\'atica Pura y Aplicada\\  Universitat Polit\`ecnica de Val\`encia\\ 46022 Valencia. Spain.}
\email{easancpe@mat.upv.es}

\author[P. Tradacete]{P. Tradacete}
\address{P. Tradacete\\Department of Mathematics\\ Universidad Carlos III de Madrid\\ 28911, Legan\'es, Madrid, Spain.}
\email{ptradace@math.uc3m.es}

\thanks{E. A. S\'anchez P\'erez gratefully acknowledges support of Spanish Ministerio de Econom\'{\i}a, Industria y Competitividad through grant  MTM2016-77054-C2-1-P.
P. Tradacete gratefully acknowledges support of Spanish Ministerio de Econom\'{\i}a, Industria y Competitividad through grants MTM2016-76808-P and MTM2016-75196-P}

\maketitle

\begin{abstract}
We explore the connection between $p$-regular operators on Banach function spaces and weighted $p$-estimates. In particular, our results focus on the following problem. Given finite measure spaces $\mu$ and $\nu$, let $T$ be an operator defined from a Banach function space $X(\nu)$ and taking values on $L^p (v d \mu)$ for $v$ in certain family of weights $V\subset L^1(\mu)_+$: we analyze the existence of a bounded family of weights $W\subset L^1(\nu)_+$ such that for every $v\in V$ there is $w \in W$ in such a way that $T:L^p(w d \nu) \to L^p(v d \mu)$ is continuous
uniformly on $V$. A condition for the existence of such a family is given in terms of $p$-regularity of the integration map associated to a certain vector measure induced by the operator $T$.
\end{abstract}

\section{Introduction}

The study of boundedness of relevant  operators defined between weighted $L^p$-spaces is a classical problem in harmonic analysis. Following the founding paper by M. A. Ariño and B. Muckenhoupt \cite{AM}, in recent years, a lot of attention has been devoted to understanding the precise relation between weighted $p$-estimates and the Hardy-Littlewood maximal operator \cite{BS, CS}, or Calderón-Zygmund operators \cite{Hy,HP}. On the other hand, weighted norm inequalities for linear operators are connected with a plethora of problems related to abstract constructions on summability on Banach function spaces.  Although there is a lot of classical developments and a lot of research which has been done since its publication, the monograph \cite{garu} by J. Garc\'{\i}a Cuerva and J. L. Rubio de Francia can be considered as a milestone in this classical topic.

Actually, partly motivated by \cite[Chapter VI, Theorem 6.7]{garu}, in this paper we investigate the  problem of existence of a bounded ``conjugate" family of weights $W$ for a given set of weights $V$ with respect to a given linear operator $T$, in the following sense: \textit{ given $v \in V$, there exists $w \in W$ in such a way that  $T$ is well defined and continuous (with uniform norm) as an operator $T:L^p(w d\nu) \to L^p(v d \mu)$.} We will provide conditions for the existence of such a conjugate family in terms of $p$-regularity of certain operator associated to $T$ (see Section \ref{existence}). Moreover, in the case that the existence of such a family is not possible, we also analyze some average domination that may be a successful substitute in some cases. This latter fact will be analyzed by means of lattice $p$-summing operators.

Let us recall that for a Banach space $E$ and a Banach lattice $Y$, an operator $T:E \to Y$ is \emph{lattice $p$-summing} if there is a constant $C>0$ such that for every $\{x_i\}_{i=1}^n\subset E$,
$$
\bigg\|\bigg(\sum_{i=1}^n|Tx_i|^p\bigg)^{\frac1p}\bigg\|_Y \leq C \, \sup_{x^* \in B_{E^*}} \bigg(\sum_{i=1}^n|\langle x_i, x^* \rangle|^p \bigg)^{\frac1p}.
$$

Lattice $p$-summing operators provide an analogue of the well-known class of $p$-summing operators which is better suited to the order and lattice properties of the spaces involved. This class of operators was introduced for the case $p=1$ by L. P. Yanowskii and later studied by J. Szulga, N. J. Nielsen, N. Danet and B. P. Pomares for the general case during the eighties (see
\cite{dan,nie,pom1,pom2,szu1,szu2,yan}).

Recall also  that given quasi-Banach lattices $E,F$, and $0< p< \infty,$ a linear operator $T:E\rightarrow F$ is said to be \emph{$p$-regular} if there is a constant $K>0$ such that for every $\{x_i\}_{i=1}^n\subset E$
$$
\bigg\|\bigg(\sum_{i=1}^n|Tx_i|^p\bigg)^{\frac1p}\bigg\|\leq K \bigg\|\bigg(\sum_{i=1}^n|x_i|^p\bigg)^{\frac1p}\bigg\|.
$$

This family of operators, which can be thought of as a generalization of the notion of regular operator ---i.e. those which can be written as a difference of two positive operators---, was first considered in \cite{B} in connection with the interpolation theory of Banach lattices (and also implicitly in \cite{N}). In particular, these operators are related to the so-called Marcinkiewicz-Zygmund inequalities \cite{deju} and other interpolation properties (see \cite{RT2,STpqregular} for recent developments about this).

The structure of the paper is as follows. First, in Section \ref{prelim}, we analyze some basic properties of lattice $p$-summing  operators and $p$-regular operators between Banach lattices, introducing the elements that will be necessary later on, and showing the main separation argument. In Section \ref{s:pregular}, we study the factorization properties of $p$-regular operators and as an application we provide a proof of classical results of W. B. Johnson, L. Jones \cite{JJ} and L. Weis \cite{W} which allow us to consider certain operators on Banach function spaces as operators on weighted $L^p$ spaces. The corresponding results for lattice $p$-summing operators are collected in Section \ref{s:latticep}. Finally, Section \ref{existence} is devoted to the study of existence of conjugate families of weights. The techniques in this last section exploit both $p$-regular and lattice $p$-summing operators together with vector measure technology.

For unexplained terms and notions concerning Banach lattices, Banach spaces and operator theory we refer the reader to the monographs \cite{DJT,LT2,libro}.

\section{Preliminaries} \label{prelim}

Let $(\Omega,\Sigma,\mu)$ be a  finite measure space. Let  $L^0(\mu)$  be the space of all measurable real functions on $\Omega$, identifying functions that are equal $\mu$-a.e.   We say that a Banach function space over $\mu$ is a Banach space $X(\mu) \subseteq L^0(\mu)$ that contains all simple functions,   and whenever $|f|\leq |g|$ and $g\in X(\mu)$, then $f\in X(\mu)$ with $\|f\|_{X(\mu)}\leq \|g\|_{X(\mu)}$. For the aim of simplicity, we will write sometimes $X$ instead of $X(\mu)$ if the measure does not play a relevant role or is clear in the context. 
Recall $X(\mu)$ is order continuous if for every sequence $(f_n)_n\in X(\mu)$, $f_n\downarrow 0$ implies $\|f_n\|_{X(\mu)}\to 0$. In this case,
the K\"othe dual
$$
X(\mu)'=\{g \in L^0(\mu): fg \in L^1(\mu) \textrm{ whenever } f\in X(\mu)\},
$$
equipped with the norm
$$
\|g\|_{X(\mu)'}:=\sup_{\|f\|_{X(\mu)}\leq1} \, \int fg \,d\mu,
$$
coincides with the (topological) dual space $X(\mu)^*$.

A Banach function space $X(\mu)$ is $p$-convex for $p\ge 1$  if there is $K>0$ such that for every finite set of functions $f_1,...,f_n \in X(\mu),$
$$
\Big\| \Big( \sum_{i=1}^n \big| f_i \big|^p \Big)^{1/p} \Big\| \le
K \Big( \sum_{i=1}^n \big\| f_i \big\|^p \Big)^{1/p}.
$$
The infimum of all such constants is called the $p$-convexity constant and is denoted by $M_{(p)}(X(\mu)).$

For $0 < p < \infty$, the $p$-th power  of $X(\mu)$  is
defined as the linear space
$$
X(\mu)_{[p]}:=\{f \in L^0(\mu): |f|^{1/p} \in X(\mu)\}.
$$
If $X(\mu)$ is $p$-convex (with $p$-convexity constant  $M^{(p)}(X(\mu))$ equal to $1$), then $X(\mu)_{[p]}$ is  a Banach function space over $\mu$  with norm given by
$$
\|f\|_{X(\mu)_{[p]}}:= \| |f|^{1/p} \|_{X(\mu)}^p.
$$
It follows that $X(\mu)_{[p]}$ is order continuous if and only if so is $X(\mu)$, and in this case simple functions are dense in both spaces (see \cite[Ch.2]{libro} and \cite{LT2} for this and further details on Banach function spaces). Moreover, a functional  $x' \in (X(\mu)_{[p]})^*$ acts on  $|x|^p \in X(\mu)_{[p]} $ (where $x \in X(\mu)$) as
$$
\langle x',|x|^p \rangle=\int x' |x|^pd\mu.
$$

The operator counterpart of the definition of $p$-th power of a Banach function space is given by the so called $p$-th power factorable operators. Namely, given a Banach space $E$, an operator $T:X(\mu) \to E$ is $p$-th power factorable if for every $f \in X(\mu)$, we have that
$$
\| T(f) \|_E \le K \, \| |f|^{1/p} \|_{X(\mu)}^p
$$
(see \cite[Ch.5]{libro} for the main properties of this class of operators).

Let us also recall some preliminaries on vector measures: Given a $\sigma$-algebra of subsets of some set $\Omega$ and a Banach space $E$, a function $m:\Sigma \rightarrow E$ is a (countably additive) vector measure if
$$
m(\cup_{n=1}^{\infty}A_n)=\sum_{n=1}^{\infty}m(A_n)
$$
in the norm topology of $E$ for all sequences $\{A_n\}_n$ of pairwise disjoint sets of $\Sigma$. For each $x^*$ in the dual space $E^*$, the expression
$$
\langle m,x^* \rangle(A):=\langle m(A),x^*\rangle, \quad A\in\Sigma,
$$
defines a scalar measure. A set $A \in \Sigma$ is $m$-null if $m(B) = 0 $ for every $B\in\Sigma$ with  $B \subseteq A.$  Two (scalar or vector) measures are said to be equivalent if they have the same null sets.

A Rybakov measure of $m$ is a scalar measure of the form  $\langle m,x^* \rangle$ for some $x^* \in E^*$ that is equivalent to $m$. It is well-known that every vector measure has a Rybakov measure (see \cite[Ch.IX]{dies}).

The space of integrable functions with respect to the vector measure $m$, which we denote $L^1(m)$, is the space of all measurable (scalar valued) functions $f$ that are integrable with
respect to each scalar measure $\langle m,x^* \rangle$ for $x^*\in E^*$ and
for each $A \in \Sigma$ there exists $\int_A f dm \in E$
with
$$
\Big\langle\int_A f dm,x^*\Big\rangle=\int_A f d\langle m,x^* \rangle,\quad \text{for }x^* \in X^*.
$$
The space $L^1(m)$  is an order continuous Banach function space over any Rybakov measure with the norm
$$
\|f \|_{L^1(m)}:= \sup_{x^* \in B_{E^*}} \int |f | \, d | \langle m, x^* \rangle |
$$
and the $m-$a.e. order. The spaces $L^p(m)$ for $1 \le p<\infty$ are defined in the obvious way; in terms of $p$-th powers,
we have that $L^p(m)= (L^1(m))_{[1/p]}$, or equivalently, $L^p(m)_{[p]} = L^1(m).$ We point out that the spaces $L^p(m)$ are always $p$-convex with $M_{(p)}(L^p(m))=1.$

As usual, we will write $I_m:L^1(m) \to E$ for the integration map
$$
I_m(f):= \int f d m,\quad \text{for }\,f \in L^1(m).
$$

An operator $T:E \to F$ between Banach spaces $E$ and $F$ is called $p$-summing if there is a constant $K$ such that for every finite set $x_1,...,x_n \in E,$
$$
\Big( \sum_{i=1}^n \big\| T(x_i) \big\|^p \Big)^{1/p}  \le
K \, \sup_{x^* \in B_{E^*}} \Big( \sum_{i=1}^n \big|\langle x_i, x^* \rangle \big|^p \Big)^{1/p}.
$$
We refer the reader to the monograph \cite{DJT}, for background on ideals of operators between Banach spaces (in particular, on $p$-summing and $p$-dominated operators).

Let us fix in what follows the context of the present paper. We will use systematically the two classes of operators cited in the Introduction.
The following section describes the separation result concerning this inequality that will be used several times through all the paper.

\subsection{Weighted norm inequalities and domination for $p$-regular type operators} \label{sbs}

Consider an operator $T$ from a Banach space $E$ to a Banach lattice $Y$. Given $1\leq p<\infty$, let $\frac1p+\frac1{p'}=1$. Consider the following two inequalities:

\begin{itemize}

\item
$T$ is \textit{lattice $p$-summing } if and only if
for each finite set $x_1, \ldots, x_n \in E$,
\begin{equation} \label{in1}
\bigg\|  \Big( \sum_{i=1}^n |T(x_i)|^p  \Big)^{1/p} \bigg\| \le C \,\sup_{(a_i) \in B_{\ell^{p'}} }\bigg\| \sum_{i=1}^n a_i x_i \bigg\|.
\end{equation}
The lattice $p$-summing norm of $T$ is the least possible constant $C>0$ appearing in this inequality and will be denoted by $\lambda_p(T)$.

\item If $E$ is also a Banach lattice, $T$ is
\textit{$p$-regular} if and only if for each finite set $x_1, \ldots, x_n \in E$,
\begin{equation} \label{in2}
\bigg\|  \Big( \sum_{i=1}^n |T(x_i)|^p  \Big)^{1/p} \bigg\| \le C \,\bigg\|  \Big( \sum_{i=1}^n |x_i|^p  \Big)^{1/p} \bigg\|
\end{equation}
The $p$-regular norm of $T$ is the least possible $C>0$ appearing in this inequality and will be denoted by $\rho_p(T)$.

\end{itemize}

Note that for $x_1, \ldots, x_n$  in a Banach lattice we always have
$$
\sup_{(a_i) \in B_{\ell^{p'}} }\bigg\| \sum_{i=1}^n a_i x_i \bigg\|\leq \bigg\| \sup_{(a_i) \in B_{\ell^{p'}} }\sum_{i=1}^n a_i x_i \bigg\|= \bigg\|  \Big( \sum_{i=1}^n |x_i|^p  \Big)^{1/p} \bigg\|.
$$
In particular, every lattice $p$-summing operator is $p$-regular with $\rho_p(T)\leq\lambda_p(T)$. However, it is easy to check that the identity on $L_p$ is $p$-regular but not lattice $p$-summing. In fact, every positive operator between Banach lattices, or even a difference of positive operators, is $p$-regular for every $p\ge 1$.

Note that the above inequalities can be written in terms of an expression involving the norm of an element in the dual of a certain Banach space. This observation, together with a standard separation argument yields the following:

\begin{proposition}\label{kyfan:reg}
Let $X$, $Y$ be $p$-convex Banach function spaces (with $p$-convexity constant 1). An operator $T:X\to Y$ is \textit{$p$-regular} if and only if there exists $C>0$ such that for every $y^* \in B_{(Y_{[p]})^*}$ there is $z^* \in B_{(X_{[p]})^*}$ with
$$
|\langle |T(x)|^p,  y^* \rangle| \le C^p \, \langle |x|^p, z^* \rangle,\quad x\in X.
$$
Moreover, the smallest constant $C>0$ appearing in this inequality coincides with $\rho_p(T)$.
\end{proposition}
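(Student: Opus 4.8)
The plan is to prove the two implications separately. The implication from the displayed domination to $p$-regularity is a direct estimate, and the converse is the standard separation argument alluded to above, concretely an application of Ky Fan's lemma.

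For the direction ``$(\Leftarrow)$'', I would fix a finite set $x_1,\dots,x_n\in X$ and use that, since $X,Y$ are $p$-convex with constant $1$, the positive element $g:=\sum_{i=1}^n|Tx_i|^p$ belongs to $Y_{[p]}$ with $\|g\|_{Y_{[p]}}=\|(\sum_i|Tx_i|^p)^{1/p}\|_Y^p$. A Hahn--Banach argument yields a norming functional for $g$ which may be taken positive, that is, $y^*\in B_{(Y_{[p]})^*}$ with $y^*\ge0$ and $\langle g,y^*\rangle=\|g\|_{Y_{[p]}}$. Feeding this $y^*$ into the hypothesis produces $z^*\in B_{(X_{[p]})^*}$, and then, using $\langle|Tx_i|^p,y^*\rangle=|\langle|Tx_i|^p,y^*\rangle|$ for each $i$,
$$\Big\|\Big(\sum_{i=1}^n|Tx_i|^p\Big)^{1/p}\Big\|_Y^p=\sum_{i=1}^n\langle|Tx_i|^p,y^*\rangle\le C^p\sum_{i=1}^n\langle|x_i|^p,z^*\rangle\le C^p\Big\|\sum_{i=1}^n|x_i|^p\Big\|_{X_{[p]}}=C^p\Big\|\Big(\sum_{i=1}^n|x_i|^p\Big)^{1/p}\Big\|_X^p.$$
This is inequality (\ref{in2}) with constant $C$, so $\rho_p(T)\le C$.

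For the direction ``$(\Rightarrow)$'', put $\rho:=\rho_p(T)$ and fix $y^*\in B_{(Y_{[p]})^*}$; since $|\langle|Tx|^p,y^*\rangle|\le\langle|Tx|^p,|y^*|\rangle$ and $\||y^*|\|=\|y^*\|$, I may assume $y^*\ge0$ and drop the absolute value. The goal is a single $z^*$ working for all $x\in X$, and this is where the finite-family content of (\ref{in2}) must be exploited, as $T$ need not respect lattice operations and one cannot collapse $x_1,\dots,x_n$ into one element. I would work on the weak$^*$-compact convex set $K:=\{z^*\in(X_{[p]})^*:z^*\ge0,\ \|z^*\|_{(X_{[p]})^*}\le1\}$ and, for each finite family $F=(x_1,\dots,x_n)$ in $X$, consider the weak$^*$-continuous affine function
$$\psi_F(z^*):=\sum_{i=1}^n\langle|Tx_i|^p,y^*\rangle-\rho^p\Big\langle\sum_{i=1}^n|x_i|^p,\,z^*\Big\rangle,\qquad z^*\in K.$$
Two points need checking. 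First, $\{\psi_F\}$ is convex: given $F_1=(x_1,\dots,x_n)$, $F_2=(y_1,\dots,y_m)$ and $t\in[0,1]$, the rescaled family $F_3=(t^{1/p}x_1,\dots,t^{1/p}x_n,(1-t)^{1/p}y_1,\dots,(1-t)^{1/p}y_m)$ satisfies $\psi_{F_3}=t\psi_{F_1}+(1-t)\psi_{F_2}$, by linearity of $T$ together with $|\lambda z|^p=|\lambda|^p|z|^p$. Second, $\inf_{z^*\in K}\psi_F(z^*)\le0$ for every $F$: since $\sup_{z^*\in K}\langle g,z^*\rangle=\|g\|_{X_{[p]}}$ for any $g\ge0$, this infimum equals $\langle\sum_i|Tx_i|^p,y^*\rangle-\rho^p\|\sum_i|x_i|^p\|_{X_{[p]}}$, which is non-positive because $\langle\sum_i|Tx_i|^p,y^*\rangle\le\|\sum_i|Tx_i|^p\|_{Y_{[p]}}=\|(\sum_i|Tx_i|^p)^{1/p}\|_Y^p\le\rho^p\|(\sum_i|x_i|^p)^{1/p}\|_X^p$ by $p$-regularity. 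Ky Fan's lemma then provides $z^*_0\in K$ with $\psi_F(z^*_0)\le0$ for all $F$; taking $F=(x)$ a singleton yields $\langle|Tx|^p,y^*\rangle\le\rho^p\langle|x|^p,z^*_0\rangle$ for every $x\in X$, and undoing the passage to $|y^*|$ gives the desired inequality with $C=\rho$. Combined with ``$(\Leftarrow)$'', the least admissible constant is exactly $\rho_p(T)$.

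The main obstacle is precisely this last step: passing from inequality (\ref{in2}), valid for arbitrary finite families, to one single dominating functional $z^*$ attached to a prescribed $y^*$. Everything else — the reduction to positive $y^*$, the identity $\|g\|_{X_{[p]}}=\sup_{z^*\in K}\langle g,z^*\rangle$ for $g\ge0$, the convexity of $\{\psi_F\}$ — is routine once the correct compact convex set and the correct family of affine functions have been set up.
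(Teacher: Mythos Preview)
Your proof is correct and follows essentially the same route as the paper's: both directions match, and the hard implication uses Ky Fan's lemma applied to the family of affine functions $\psi_F(z^*)=\sum_i\langle|Tx_i|^p,y^*\rangle-\rho_p(T)^p\langle\sum_i|x_i|^p,z^*\rangle$ on a weak$^*$-compact convex set in $(X_{[p]})^*$. The only cosmetic differences are that you work on the positive part $K$ of the dual unit ball rather than the full ball, and you make the reduction to $y^*\ge0$ and the convexity check for the family $\{\psi_F\}$ explicit, whereas the paper leaves these as routine verifications.
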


\begin{proof}
Suppose $T:X\to Y$ is $p$-regular. Note that since we are assuming $X$ and $Y$ have $p$-convexity constants both equal to 1, it follows that both $X_{[p]}$ and $Y_{[p]}$ are Banach lattices.

Fix $y^*\in B_{(Y_{[p]})^*}$. Let us consider the family of affine functions given as follows: for $x_1,\ldots,x_n\in X$, let
$$
f_{x_1,\ldots,x_n;y^*}(x^*):=\sum_{i=1}^n \langle |T(x_i)|^p,  y^* \rangle  - \rho_p(T)^p\sum_{i=1}^n\langle |x_i|^p,x^*\rangle,\quad\quad x^*\in B_{(X_{[p]})^*}.
$$
Clearly, these functions belong to $C(B_{(X_{[p]})^*},w^*)$ and it is easy to check they satisfy the conditions of Ky-Fan's Lemma \cite[Lemma 9.10]{DJT}. For this, just note that given $x_1,\ldots,x_n\in X$ there is always $x^*\in B_{(X_{[p]})^*}$ such that
$$
\sum_{i=1}^n\langle |x_i|^p,x^*\rangle=\Big\| \sum_{i=1}^n|x_i|^p\Big\|_{X_{[p]}}=\Big\| \Big(\sum_{i=1}^n|x_i|^p\Big)^{1/p}\Big\|_X^p.
$$
Hence, for $x_1,\ldots,x_n\in X$, such an $x^*$ yields
\begin{eqnarray*}
f_{x_1,\ldots,x_n;y^*}(x^*)&=&\sum_{i=1}^n \langle |T(x_i)|^p,  y^* \rangle  - \rho_p(T)^p\sum_{i=1}^n\langle |x_i|^p,x^*\rangle\\
&=&\sum_{i=1}^n \langle |T(x_i)|^p,  y^* \rangle  - \rho_p(T)^p\Big\| \Big(\sum_{i=1}^n|x_i|^p\Big)^{1/p}\Big\|^p\\
&\leq & \Big\|  \Big( \sum_{i=1}^n |T(x_i)|^p  \Big)^{1/p} \Big\|^p - \rho_p(T)^p\Big\| \Big(\sum_{i=1}^n|x_i|^p\Big)^{1/p}\Big\|^p\leq0.
\end{eqnarray*}

Therefore, by Ky-Fan's Lemma there exists $z^*\in B_{(X_{[p]})^*}$ such that for every $x_1,\ldots,x_n\in X$, we have
 $$
 f_{x_1,\ldots,x_n;y^*}(z^*)\leq0.
 $$
In particular, for every $x\in X$ we have
$$
|\langle |T(x)|^p,  y^* \rangle| - \rho_p(T)^p \, \langle |x|^p, z^* \rangle=f_{x;y^*}(z^*)\leq0,
$$
as claimed.

The converse implication is straightforward.
\end{proof}

Since lattice $p$-summing operators are $p$-regular, the inequality of Proposition \ref{kyfan:reg} also holds for this class. However, in a similar fashion one can show the following characterization:

\begin{proposition}\label{kyfan:lat}
Let $E$ be a Banach space and $Y$ a $p$-convex Banach function space (with $p$-convexity constant 1). An operator $T:E\to Y$ is \textit{lattice $p$-summing} if and only if there exists $C>0$ such that for every $y^* \in B_{(Y_{[p]})^*}$ there is a regular probability measure $\eta \in \mathcal M(B_{E^*})$ such that
$$
|\langle |T(x)|^p,  y^*\rangle| \le C^p \, \int_{B_{E^*}}  |\langle x, \cdot \rangle |^p  \, d \eta, \quad x \in E.
$$
Moreover, the smallest constant $C>0$ appearing in this inequality coincides with $\lambda_p(T)$.
\end{proposition}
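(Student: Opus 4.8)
The plan is to replay the Ky-Fan argument from the proof of Proposition~\ref{kyfan:reg}, replacing the dual unit ball $B_{(X_{[p]})^*}$ by the convex weak$^*$-compact set $\mathcal M(B_{E^*})$ of regular Borel probability measures on the weak$^*$-compact ball $B_{E^*}$; this is the compact convex domain on which Ky-Fan's Lemma will be applied. First I would fix $y^*\in B_{(Y_{[p]})^*}$ and, in order to keep a modulus on the left-hand side and a probability measure on the right, reduce to the case $y^*\ge 0$: since $(Y_{[p]})^*$ is a Banach lattice, $|y^*|\in B_{(Y_{[p]})^*}$ and $|\langle|T(x)|^p,y^*\rangle|\le\langle|T(x)|^p,|y^*|\rangle$, so it suffices to produce, for each positive $y^*$ in the ball, a probability measure $\eta$ with $\langle|T(x)|^p,y^*\rangle\le C^p\int_{B_{E^*}}|\langle x,\cdot\rangle|^p\,d\eta$ for all $x\in E$ (for positive $y^*$ the left side is already nonnegative, and feeding $|y^*|$ into this then handles a general $y^*$).

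For this fixed positive $y^*$ I would introduce, for each finite family $x_1,\dots,x_n\in E$, the function on $\mathcal M(B_{E^*})$
$$
\phi_{x_1,\dots,x_n;y^*}(\eta):=\sum_{i=1}^n\langle|T(x_i)|^p,y^*\rangle-\lambda_p(T)^p\sum_{i=1}^n\int_{B_{E^*}}|\langle x_i,\cdot\rangle|^p\,d\eta .
$$
Since each $x^*\mapsto|\langle x_i,x^*\rangle|^p$ is weak$^*$-continuous on $B_{E^*}$, the map $\eta\mapsto\int_{B_{E^*}}|\langle x_i,\cdot\rangle|^p\,d\eta$ is affine and weak$^*$-continuous, so each $\phi_{x_1,\dots,x_n;y^*}$ is affine and continuous on $\mathcal M(B_{E^*})$. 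I would then check the two hypotheses of Ky-Fan's Lemma~\cite[Lemma~9.10]{DJT}. The collection of these functions is a concave family: rescaling $x_i\mapsto t^{1/p}x_i$ (for $t\ge 0$) multiplies $\phi$ by $t$, and concatenating two families adds the corresponding $\phi$'s, so any convex combination of two such functions is again one of them. And each $\phi_{x_1,\dots,x_n;y^*}$ attains a value $\le 0$: the weak$^*$-continuous map $x^*\mapsto\sum_i|\langle x_i,x^*\rangle|^p$ attains its maximum over the compact ball $B_{E^*}$ at some $x_0^*$, and at the Dirac mass $\delta_{x_0^*}$ one has $\phi_{x_1,\dots,x_n;y^*}(\delta_{x_0^*})=\langle\sum_i|T(x_i)|^p,y^*\rangle-\lambda_p(T)^p\sup_{x^*\in B_{E^*}}\sum_i|\langle x_i,x^*\rangle|^p$, which is $\le 0$ because $\langle\sum_i|T(x_i)|^p,y^*\rangle\le\|\sum_i|T(x_i)|^p\|_{Y_{[p]}}=\|(\sum_i|T(x_i)|^p)^{1/p}\|_Y^p$ and this is $\le\lambda_p(T)^p\sup_{x^*\in B_{E^*}}\sum_i|\langle x_i,x^*\rangle|^p$ by the definition of $\lambda_p(T)$, i.e. \eqref{in1} in the equivalent form using $\sup_{x^*\in B_{E^*}}$ recorded in the Introduction. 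Ky-Fan's Lemma then yields a single $\eta=\eta_{y^*}\in\mathcal M(B_{E^*})$ with $\phi_{x_1,\dots,x_n;y^*}(\eta)\le 0$ for every finite family; taking $n=1$ and combining with the reduction above gives the asserted inequality with $C=\lambda_p(T)$.

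The converse I would settle directly. Given $x_1,\dots,x_n\in E$, since $(Y_{[p]})^*$ is a Banach lattice one has, for $z:=\sum_i|T(x_i)|^p\ge 0$, that $\|(\sum_i|T(x_i)|^p)^{1/p}\|_Y^p=\|z\|_{Y_{[p]}}=\sup\{\langle z,y^*\rangle:\ y^*\in B_{(Y_{[p]})^*},\ y^*\ge 0\}$; applying the hypothesis inequality to such a $y^*$, summing over $i$, and bounding $\int_{B_{E^*}}\sum_i|\langle x_i,\cdot\rangle|^p\,d\eta\le\sup_{x^*\in B_{E^*}}\sum_i|\langle x_i,x^*\rangle|^p$ (as $\eta$ is a probability measure) yields \eqref{in1} with constant $C$, hence $\lambda_p(T)\le C$; together with the forward estimate this identifies the optimal constant as $\lambda_p(T)$. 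I expect the only real obstacle to be the verification of the Ky-Fan hypotheses, and within that the clause that each $\phi$ dips to $\le 0$ --- this is exactly where the lattice $p$-summing inequality enters, once the supremum on the right-hand side is replaced by its value at a single point of the compact ball $B_{E^*}$ --- together with the modulus/positivity bookkeeping in passing to $|y^*|$; the rest is a faithful transcription of the proof of Proposition~\ref{kyfan:reg}.
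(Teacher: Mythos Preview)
Your proposal is correct and follows essentially the same approach as the paper, which merely sketches the proof by indicating the family of functions
\[
g_{x_1,\ldots,x_n;y^*}(\eta):=\sum_{i=1}^n \langle |T(x_i)|^p,y^*\rangle-\int_{B_{E^*}}\sum_{i=1}^n|\langle x_i,\cdot\rangle|^p\,d\eta
\]
on $\mathcal M(B_{E^*})$ and referring back to the argument of Proposition~\ref{kyfan:reg}. You have carried out precisely that program in full detail, including the reduction to positive $y^*$, the verification of the Ky-Fan hypotheses via Dirac masses, and the converse with the optimal-constant identification; nothing in your write-up departs from the intended route.
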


\begin{proof}
The proof follows the same lines as that of Proposition \ref{kyfan:reg} using, for a non-zero functional $y'\in Y_{[p]}^*$ and $(x_i)_{i=1}^n\subset E$,  the family of functions given by
$$
g_{x_1,\ldots,x_n;y^*}( \eta):=  \Big( \sum_{i=1}^n \langle |T(x_i)|^p,  y^* \rangle \Big) - \int_{B_{E^*}} \sum_{i=1}^n |\langle x_i, \cdot \rangle |^p  \, d \eta,$$
for $\eta\in \mathcal M(B_{E^*})=(C(B_{E^*},w*))^*$.
\end{proof}

\begin{remark}
The separation arguments mentioned above can be also used when the w*-compact  sets are not necessarily the unit ball of a dual space. In order to see this, just note that the Hahn-Banach Theorem  separation argument works when the functions under consideration belong to a certain $C(K)$-space for some compact Hausdorff space $K$. We will use this version in the next section, where general families of weights will be considered. This will allow to deal with families of positively norming sets for Banach lattices, that have been studied and applied in a similar context in \cite[S.5]{sancheztradacetepositivelynorming}.
\end{remark}

\section{$p$-regular operators between $p$-convex Banach lattices}\label{s:pregular}

A direct application of the arguments given in the previous section gives the following characterization for $p$-regular operators between $p$-convex Banach function spaces. A version of this result can be found in Proposition 4.1 of \cite{defmasan}. It provides an example of what will be the fundamental tool used in Section \ref{conjugate}.

\begin{proposition} \label{pconvexop}
Consider an operator $T:X(\mu_1) \to Y(\mu_2)$, where $X(\mu_1)$ and   $Y(\mu_2)$ are $p$-convex order continuous Banach function spaces over $(\Omega_1, \mu_1)$ and $(\Omega_2,\mu_2)$, respectively. The following assertions are equivalent:
\begin{itemize}
\item[(i)]
$T$ is  $p$-regular.

\item[(ii)]
There is $C>0$ such that for each $0 \le y^*\in B_{(Y(\mu_2)_{[p]})^*}$ there is $0 \le z^* \in B_{(X(\mu_1)_{[p]})^*}$ such that for every $x \in X(\mu_1)$,
$$
 \Big( \int_{\Omega_2} |T(x)|^p \, y^*\, d \mu_2 \Big)^{1/p}
\le C \,
\Big( \int_{\Omega_1} |x|^p \, z^*\, d \mu_1 \Big)^{1/p}.
$$

\item[(iii)]
For each $0 \le y^* \in B_{(Y(\mu_2)_{[p]})^*}$ there exists $0 \le z^* \in B_{(X(\mu_1)_{[p]})^*}$ such that the following diagram commutes
$$
\xymatrix{
X(\mu_1) \ar[rr]^{T} \ar@{.>}[d]_{i} & &   Y(\mu_2)  \ar@{.>}[d]_{i}.\\
L^p(z^* d \mu_1)  \ar@{.>}[rr]^{\tilde T}&  & L^p(y^* d \mu_2)
}
$$
\end{itemize}
\end{proposition}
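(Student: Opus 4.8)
The plan is to run the cycle of implications (i)$\Rightarrow$(ii)$\Rightarrow$(iii)$\Rightarrow$(i). The only substantial ingredient --- the Hahn--Banach/Ky Fan separation --- is already packaged in Proposition~\ref{kyfan:reg}, so everything here is a matter of reformulating the resulting weighted inequality as a factorization through $L^p$-spaces, plus some bookkeeping on positivity. Two standing remarks will be used throughout: first, since $X(\mu_1)$ and $Y(\mu_2)$ are order continuous, the $p$-th powers $X(\mu_1)_{[p]}$ and $Y(\mu_2)_{[p]}$ are order continuous Banach function spaces, so their duals are the respective K\"othe duals and each pairing $\langle |x|^p, z^* \rangle$ is literally the integral $\int |x|^p z^*\, d\mu_1$; second, the set $\{\,|x|^p : x \in X(\mu_1)\,\}$ coincides with the positive cone of $X(\mu_1)_{[p]}$ (if $0 \le f \in X(\mu_1)_{[p]}$ then $x := f^{1/p} \in X(\mu_1)$ and $|x|^p = f$), and likewise for $Y(\mu_2)$.

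For (i)$\Rightarrow$(ii) I would apply Proposition~\ref{kyfan:reg} to obtain, for each $y^* \in B_{(Y(\mu_2)_{[p]})^*}$, some $z^* \in B_{(X(\mu_1)_{[p]})^*}$ with $|\langle |T(x)|^p, y^* \rangle | \le \rho_p(T)^p \langle |x|^p, z^* \rangle$ for all $x$, and then restrict attention to $0 \le y^*$. For such a $y^*$ the left-hand side equals $\langle |T(x)|^p, y^* \rangle \ge 0$, so the inequality forces $\langle |x|^p, z^* \rangle \ge 0$ for every $x \in X(\mu_1)$; by the cone identification above this means exactly $z^* \ge 0$. Rewriting the pairings as integrals and extracting $p$-th roots then gives (ii) with $C = \rho_p(T)$.

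For (ii)$\Rightarrow$(iii), fix $0 \le y^*$ and the associated $0 \le z^*$. Since $\one$ belongs to both $X(\mu_1)_{[p]}$ and $Y(\mu_2)_{[p]}$, the measures $z^*\, d\mu_1$ and $y^*\, d\mu_2$ are finite, hence simple functions --- which lie in $X(\mu_1)$ and $Y(\mu_2)$ --- are dense in $L^p(z^* d\mu_1)$ and $L^p(y^* d\mu_2)$; so the vertical inclusions $i$ in the diagram are well defined, contractive, and have dense range. By (ii), $i(x) = 0$ in $L^p(z^* d\mu_1)$ (that is, $\int|x|^p z^*\,d\mu_1 = 0$) forces $\int|T(x)|^p y^*\,d\mu_2 = 0$, i.e. $i(T(x)) = 0$; thus $\widetilde T\big(i(x)\big) := i(T(x))$ is a well-defined linear map on the dense subspace $i(X(\mu_1))$, and the inequality in (ii) reads precisely $\|\widetilde T(i(x))\|_{L^p(y^* d\mu_2)} \le C\,\|i(x)\|_{L^p(z^* d\mu_1)}$. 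Its continuous extension, still denoted $\widetilde T$, satisfies $\|\widetilde T\| \le C$ and makes the square commute.

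Finally, for (iii)$\Rightarrow$(i) I would read the commuting square backwards: for each such $y^*$ and every $x \in X(\mu_1)$ one gets $\int|T(x)|^p y^*\,d\mu_2 = \|\widetilde T(i(x))\|^p \le \|\widetilde T\|^p \int|x|^p z^*\,d\mu_1$, which is the inequality of (ii) with constant $\|\widetilde T\|$. Then, given $x_1, \dots, x_n \in X(\mu_1)$, the element $\sum_{i=1}^n |T(x_i)|^p$ is positive in the Banach lattice $Y(\mu_2)_{[p]}$, so by weak$^*$-compactness of the ball it is normed by some positive $y^* \in B_{(Y(\mu_2)_{[p]})^*}$; applying the previous estimate termwise with the $z^*$ attached to this $y^*$, and using $\|z^*\| \le 1$, collapses to $\|(\sum_{i=1}^n|T(x_i)|^p)^{1/p}\|_Y \le \|\widetilde T\|\,\|(\sum_{i=1}^n |x_i|^p)^{1/p}\|_X$, so $T$ is $p$-regular; and since the factorizations built in (ii)$\Rightarrow$(iii) always have $\|\widetilde T\| \le C = \rho_p(T)$, the three conditions hold with a common constant. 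I do not expect a deep obstacle here --- there is none beyond Proposition~\ref{kyfan:reg} --- the two points to watch are the bookkeeping ones: arranging that the separating functional $z^*$ be \emph{positive} (handled by the cone identification), and checking that the factorizing operator $\widetilde T$, a priori only densely defined, respects the identification forced by $z^*$ possibly vanishing on a set of positive $\mu_1$-measure before it is extended by continuity.
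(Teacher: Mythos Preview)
Your proof is correct and follows the same approach as the paper: both derive (i)$\Leftrightarrow$(ii) from Proposition~\ref{kyfan:reg} and then identify (ii) with the factorization (iii) via boundedness of the formal inclusions $i$. You supply considerably more detail --- the automatic positivity of $z^*$ via the cone identification $\{|x|^p : x \in X\} = (X_{[p]})_+$ (where the paper just says to replace functionals by their moduli), the density/well-definedness construction of $\widetilde T$, and the explicit norming argument for (iii)$\Rightarrow$(i) --- but the skeleton is identical.
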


\begin{proof}
Note this can be seen as a particular case of the classical Maurey-Rosenthal Theorem, the reader can find a detailed proof in many places, for example in Theorem 3.1 of \cite{defasan}. The equivalence (i) $\Leftrightarrow$ (ii) follows from Proposition \ref{kyfan:reg}, replacing the functional $y^*\in B_{(Y(\mu_2)_{[p]})^*}$ by its modulus if necessary, and noting that the duality pairing in $(X(\mu_1)_{[p]})^*$ (respectively, in  $(Y(\mu_2)_{[p]})^*$) corresponds to
$$
 \langle |x|^p, x^* \rangle=\int_{\Omega_1}|x|^p x^* d\mu_1,\quad\quad x\in X(\mu_1),\,x^*\in (X(\mu_1)_{[p]})^*,
$$
(respectively,  $\langle |y|^p, y^* \rangle=\int_{\Omega_2}|y|^p y^* d\mu_2$ for $y\in Y(\mu_2),\,y^*\in (Y(\mu_2)_{[p]})^*$).

For the equivalence with (iii), just note that the formal inclusion $i:X(\mu_1)\to L^p(z^* d\mu_1)$ is bounded. Indeed, for $f\in X(\mu_1)$ we have
$$
\|f\|_{L^p(z^* d\mu_1)}=\Big( \int_{\Omega_1} |f|^p \, z^*\, d \mu_1 \Big)^{1/p}\leq \Big( \||f|^p\|_{X(\mu_1)_{[p]}}\|z^*\|_{(X(\mu_1)_{[p]})^*}\Big)^{1/p}\leq\|f\|_{X(\mu_1)}.
$$
Similarly, $i:Y(\mu_2)\to L^p(y^* d\mu_2)$ is also bounded.
\end{proof}

Note that, as in Proposition \ref{kyfan:reg}, the smallest possible constant $C>0$ appearing in (ii) coincides with $\rho_p(T)$.

For endomorphisms on a $p$-convex Banach lattice, the previous result can be pushed forward to get a single weight as follows:

\begin{theorem}\label{endop-reg}
Let $1\leq p<\infty$. Suppose $X$ is a $p$-convex Banach function space over some finite measure space $(\Omega,\Sigma,\mu)$. If $T:X\to X$ is $p$-regular, then there exists a strictly positive functional $g\in B_{(X_{[p]})^*}$ such that $\tilde T:L^p(g d\mu)\to L^p(g d\mu)$ is bounded with $\|\tilde T\|\leq 2\rho_p(T)$.
\end{theorem}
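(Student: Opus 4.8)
The plan is to produce the single weight $g$ by an averaging (Maurey-type) iteration of Proposition~\ref{pconvexop}. First I would fix a strictly positive functional $g_0\in B_{(X_{[p]})^*}$: since $X$, hence $X_{[p]}$, is an order continuous Banach function space over the finite measure $\mu$, we have $(X_{[p]})^*=(X_{[p]})'$, which is a saturated Banach function space, and a standard exhaustion argument produces a weak unit $0<g_0$ in it; after normalization we may assume $\|g_0\|_{(X_{[p]})^*}\le 1$. Then, applying Proposition~\ref{pconvexop}(ii) (with $X(\mu_1)=Y(\mu_2)=X$ and optimal constant $C=\rho_p(T)$), I would construct inductively positive functionals $g_n\in B_{(X_{[p]})^*}$ by letting $g_{n+1}$ be the functional $z^*$ produced when one takes $y^*=g_n$, so that for every $n\ge 0$ and every $x\in X$,
$$
\int_\Omega |T(x)|^p\, g_n\, d\mu\ \le\ \rho_p(T)^p \int_\Omega |x|^p\, g_{n+1}\, d\mu .
$$

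Next I would set $g:=\sum_{n=0}^{\infty}2^{-(n+1)}g_n$. The series converges in norm, $\|g\|_{(X_{[p]})^*}\le\sum_{n\ge 0}2^{-(n+1)}=1$, and $g\ge\frac12 g_0>0$ $\mu$-a.e., so $g$ is a strictly positive element of $B_{(X_{[p]})^*}$; moreover $g\in L^1(\mu)$ because $\chi_\Omega\in X_{[p]}$, so $g\,d\mu$ is a finite measure, the formal inclusion $i:X\to L^p(g\,d\mu)$ is well defined, bounded with norm $\le 1$ (exactly as in the proof of Proposition~\ref{pconvexop}) and injective (as $g>0$ a.e.), and it has dense range since the simple functions are dense both in $X$ and in $L^p(g\,d\mu)$. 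Summing the displayed inequalities against the weights $2^{-(n+1)}$, interchanging sum and integral (all integrands are nonnegative) and reindexing,
\begin{align*}
\int_\Omega |T(x)|^p\, g\, d\mu
&\le \rho_p(T)^p\sum_{n\ge 0}2^{-(n+1)}\int_\Omega |x|^p\, g_{n+1}\, d\mu\\
&= 2\,\rho_p(T)^p\sum_{m\ge 1}2^{-(m+1)}\int_\Omega |x|^p\, g_m\, d\mu
\ \le\ 2\,\rho_p(T)^p\int_\Omega |x|^p\, g\, d\mu.
\end{align*}
Taking $p$-th roots gives $\|i(T(x))\|_{L^p(g\,d\mu)}\le 2^{1/p}\rho_p(T)\,\|i(x)\|_{L^p(g\,d\mu)}\le 2\rho_p(T)\,\|i(x)\|_{L^p(g\,d\mu)}$ for all $x\in X$ (using $p\ge 1$), and since $i$ has dense range this estimate on the dense subspace $i(X)$ extends uniquely to a bounded operator $\tilde T:L^p(g\,d\mu)\to L^p(g\,d\mu)$ with $\tilde T\circ i=i\circ T$ and $\|\tilde T\|\le 2\rho_p(T)$, as desired.

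I do not expect a genuine obstacle. The only points requiring care are the existence of the strictly positive $g_0$ in the unit ball (routine for order continuous Banach function spaces over finite measures) and the bookkeeping with the averaging constants, which is precisely what forces the factor $2$: replacing $2^{-(n+1)}$ by $(1-\theta)\theta^{n}$ with $\theta<1$ close to $1$ would in fact yield any constant strictly larger than $\rho_p(T)$, so the constant $2$ in the statement is not sharp.
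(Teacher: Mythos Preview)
Your proof is correct and follows essentially the same iterative averaging strategy as the paper: both construct the sequence $(g_n)$ by repeated application of Proposition~\ref{pconvexop} and then form a geometric average to obtain the single weight $g$. The only differences are cosmetic---the paper simply takes $g_0=\chi_\Omega$ (assuming without loss of generality that it lies in $B_{(X_{[p]})^*}$) rather than invoking an exhaustion argument, and your bookkeeping with the coefficients $2^{-(n+1)}$ is slightly tidier; your closing remark on the non-sharpness of the constant $2$ is an extra observation not in the paper.
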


\begin{proof}
Let $g_0=\chi_\Omega$. Without loss of generality we can assume $g_0\in B_{(X_{[p]})^*}$. Since $T:X\to X$ is $p$-regular and $X$ is $p$-convex, by Proposition \ref{pconvexop}, there exists $g_1\in B_{(X_{[p]})^*_+}$ such that for every $f\in X$
$$
\int_\Omega|Tf|^p g_0 d\mu \le \rho_p(T)^p \int_\Omega |f|^p g_1 d\mu.
$$
Inductively, using Proposition \ref{pconvexop}, we can construct a sequence $(g_i)_{i\in\mathbb N}\subset B_{(X_{[p]})^*_+}$ such that for $f\in X$ and $i\in\mathbb N$ we have
$$
\int_\Omega|Tf|^p g_i d\mu \le \rho_p(T)^p \int_\Omega |f|^p g_{i+1} d\mu.
$$

Now, let $g=\sum_{i\in\mathbb N} 2^{-i}g_i$. This series is convergent in the norm of $X_{[p]}^*$ and we clearly have $g\in B_{(X_{[p]})^*_+}$. Therefore, for $f\in X$ we have
$$
\int_\Omega |Tf|^p g d\mu=\sum_{i\in\mathbb N}2^{-i} \int_\Omega |Tf|^p g_i d\mu \le \rho_p(T)^p \sum_{i\in\mathbb N}2^{-i} \int_\Omega |f|^p g_{i+1} d\mu \le 2\rho_p(T)^p\int_\Omega|f|^p g d\mu.
$$
Taking into account that simple functions belong to $X$ and these are dense in $L^p(g d\mu)$, the conclusion follows.
\end{proof}

As an application of this we can provide alternative proofs of some classical results given in \cite{JJ} and \cite{W}:

\begin{corollary}
Let $1\le p<\infty$ and $T:L^p(\mu)\to L^p(\mu)$ be any bounded operator. There is $g\in L^1(\mu)_+$ such that $\tilde T:L^2(gd\mu)\to L^2(gd\mu)$ is bounded.
\end{corollary}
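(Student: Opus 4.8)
The plan is to combine Theorem~\ref{endop-reg}, used with regularity exponent $2$, with the classical Marcinkiewicz--Zygmund inequality. The latter provides, for each $1\le p<\infty$, a constant $K_p$ such that every bounded operator $S:L^p(\mu)\to L^p(\mu)$ satisfies
\[
\Big\|\Big(\sum_{i=1}^n|Sf_i|^2\Big)^{1/2}\Big\|_{L^p}\le K_p\,\|S\|\,\Big\|\Big(\sum_{i=1}^n|f_i|^2\Big)^{1/2}\Big\|_{L^p}
\]
for every finite family $f_1,\dots,f_n$ in $L^p(\mu)$ (see e.g. \cite{garu} or \cite{deju}); in the terminology of Section~\ref{sbs} this says exactly that $S$ is $2$-regular with $\rho_2(S)\le K_p\|S\|$. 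This is the only ingredient outside the order machinery. Suppose first that $p\ge 2$. Then $L^p(\mu)$ is a $2$-convex Banach function space with $2$-convexity constant $1$, so Theorem~\ref{endop-reg} (with the exponent there equal to $2$) applies to the $2$-regular operator $T$ and produces a strictly positive $g\in B_{(L^p(\mu)_{[2]})^*}$ with $\tilde T:L^2(g\,d\mu)\to L^2(g\,d\mu)$ bounded. Since $L^p(\mu)_{[2]}=L^{p/2}(\mu)$ isometrically and $\mu$ is finite, its dual is $L^{(p/2)'}(\mu)\subseteq L^1(\mu)$ (with $(p/2)'=\infty$ when $p=2$), so $g\in L^1(\mu)_+$ and this case is complete.

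For $1\le p<2$ the space $L^p(\mu)$ is no longer $2$-convex, so I would pass to the adjoint $S:=T^*:L^{p'}(\mu)\to L^{p'}(\mu)$, which is bounded with $p'>2$, and apply the previous step to $S$. The only subtlety is that recovering a weight for $T$ requires inverting the weight obtained for $S$, so the latter must be bounded away from zero; but this is automatic once one keeps track of the construction in the proof of Theorem~\ref{endop-reg}, where the weight is a series $\sum_i 2^{-i}g_i$ that may be arranged so as to retain the term $g_0=\chi_\Omega$, and hence dominates a positive multiple of $\chi_\Omega$. Thus, for $1<p<2$, one gets $h\in L^1(\mu)_+$ with $h\ge\delta>0$ and $\tilde S:L^2(h\,d\mu)\to L^2(h\,d\mu)$ bounded. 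The dual of $L^2(h\,d\mu)$ under the pairing $\langle f,\phi\rangle=\int f\phi\,d\mu$ is $L^2(h^{-1}\,d\mu)$; as $h^{-1}$ is bounded, that space lies inside $L^1(\mu)$, simple functions are dense in it, and the adjoint relation $\langle Tf,\phi\rangle=\langle f,S\phi\rangle$ shows that $(\tilde S)^*$ coincides with $T$ on simple functions. Hence $\tilde T:=(\tilde S)^*:L^2(h^{-1}\,d\mu)\to L^2(h^{-1}\,d\mu)$ is bounded and extends $T$, and $g:=h^{-1}\in L^\infty(\mu)_+\subseteq L^1(\mu)_+$ is the sought weight.

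The endpoint $p=1$ (so $p'=\infty$) goes the same way, except that $L^\infty(\mu)$ is not order continuous and Theorem~\ref{endop-reg} is not directly available. Instead I would run the underlying separation argument by hand --- this is legitimate by the Remark after Proposition~\ref{kyfan:lat} --- over the $w^*$-compact convex set $\{h\in(L^\infty)^*_+\,:\,\|h\|\le1,\;h\ge\varepsilon\mu\}$, for a small fixed $\varepsilon>0$: the Ky--Fan test is satisfied, with a constant depending only on $K_1\|T\|$, $\varepsilon$ and $\mu(\Omega)$, by applying the Marcinkiewicz--Zygmund inequality for $S=T^*$ on $L^\infty$ (dual to the one for $T$ on $L^1$) over a set where $\sum_i|\phi_i|^2$ is nearly maximal. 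One then passes to the absolutely continuous part of the resulting functional, which is permissible because $S$, being the adjoint of $T:L^1\to L^1$, is $w^*$-to-$w^*$ continuous; this gives $h\ge\varepsilon$ in $L^1(\mu)_+$ with $\tilde S$ bounded on $L^2(h\,d\mu)$, and one finishes with $g:=h^{-1}$ as above.

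I expect the main obstacle to be precisely the passage to $p<2$: the failure of $2$-convexity of $L^p$ forces the duality detour, whose delicate point is ensuring the weight produced for $T^*$ is bounded below, so that its reciprocal lands in $L^1(\mu)$ --- which is why one has to use Theorem~\ref{endop-reg} through its proof rather than as a black box. The case $p=1$ is a secondary, milder difficulty, caused by the non-order-continuity of $L^\infty$ and resolved by the direct separation argument above.
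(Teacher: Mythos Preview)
Your approach coincides with the paper's: for $p\ge2$ you combine the $2$-regularity of every bounded operator on $L^p$ (Marcinkiewicz--Zygmund, or equivalently \cite[Theorem~1.f.14]{LT2}) with the $2$-convexity of $L^p$ and invoke Theorem~\ref{endop-reg}; for $p<2$ you pass to $T^*$. The paper's one-line treatment of $p<2$ does not explain how a weight $h$ for $T^*$ yields one for $T$; you correctly fill this in by observing that the adjoint of $\widetilde{T^*}:L^2(h\,d\mu)\to L^2(h\,d\mu)$ under the $d\mu$-pairing is $T$ acting on $L^2(h^{-1}\,d\mu)$, and that the construction in the proof of Theorem~\ref{endop-reg} (with $g_0=\chi_\Omega$ retained in the sum) gives $h\ge c\,\chi_\Omega$, so $h^{-1}\in L^\infty\subset L^1$. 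For $1<p<2$ this is complete and is a genuine detail the paper omits.

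Your handling of $p=1$, however, has a gap. Running Ky--Fan over $\{h\in (L^\infty)^*_+:\|h\|\le1,\ h\ge\varepsilon\mu\}$ and iterating does produce $g\in(L^\infty)^*_+$ with $g\ge\varepsilon\mu$ and $\langle|S\phi|^2,g\rangle\le C\langle|\phi|^2,g\rangle$ for all $\phi\in L^\infty$. But the step ``pass to the absolutely continuous part'' is not justified by the $w^*$-$w^*$ continuity of $S=T^*$: that property says $T^{**}$ preserves the canonical copy of $L^1$ inside $(L^\infty)^*$, a statement about how \emph{functionals} transform under $T^{**}$, whereas your inequality involves evaluating the fixed functional $g$ at $|S\phi|^2$. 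There is no reason the Yosida--Hewitt splitting $g=g_{ac}+g_s$ should respect the inequality termwise, and weak${}^*$ convergence is not compatible with the nonlinear map $\phi\mapsto|S\phi|^2$, so the obvious limiting arguments fail. Note that the paper's proof is equally silent here (Theorem~\ref{endop-reg} goes through Proposition~\ref{pconvexop}, which assumes order continuity, unavailable for $L^\infty$), so this is a shared difficulty rather than one introduced by your route; a fully rigorous treatment of $p=1$ requires an additional idea beyond what either argument supplies.
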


\begin{proof}
Suppose first $p\ge 2$. In this case, $L^p(\mu)$ is 2-convex and $T:L^p(\mu)\to L^p(\mu)$ is 2-regular (by \cite[Theorem 1.f.14]{LT2}). Hence, the conclusion follows from Theorem \ref{endop-reg}. Finally, if $p<2$, then we apply the previous argument to $T^*:L^{p'}(\mu)\to L^{p'}(\mu)$ (with $\frac1p+\frac1{p'}=1$).
\end{proof}

\begin{corollary}
Suppose $X$ is a Banach function space over some finite measure space $(\Omega,\Sigma,\mu)$. If $T:X\to X$ is regular (that is, a difference of positive operators), then there exists a strictly positive function $g\in L^1(\mu)$ such that for every $1\leq p\leq\infty$, $\tilde T:L^p(g d\mu)\to L^p(g d\mu)$ is bounded.
\end{corollary}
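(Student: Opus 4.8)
The plan is to reduce to the case of a positive operator and then to obtain a single weight by combining the $p=1$ endpoint (which Theorem~\ref{endop-reg} handles) with the elementary $L^\infty$-estimate, interpolating in between.

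First I would write $T=T_1-T_2$ with $T_1,T_2$ positive and set $S:=T_1+T_2$, so that $S$ is a positive operator on $X$ and $|Tf|\le S|f|$ for every $f\in X$. Since the $L^p(g\,d\mu)$-norms are lattice norms and simple functions are dense in $L^p(g\,d\mu)$ for $p<\infty$ as well as in $L^\infty$ for the supremum norm, it will suffice to find one strictly positive $g\in L^1(\mu)$ for which $\tilde S$ is bounded on $L^p(g\,d\mu)$ for every $1\le p\le\infty$: on simple functions one then has $\|\tilde T f\|_{L^p(g\,d\mu)}=\||Tf|\|_{L^p(g\,d\mu)}\le\|S|f|\|_{L^p(g\,d\mu)}\le\|\tilde S\|\,\|f\|_{L^p(g\,d\mu)}$, and one passes to the closure.

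Next I would apply Theorem~\ref{endop-reg} with $p=1$: a positive operator is $1$-regular with $\rho_1(S)\le\|S\|$, and $X_{[1]}=X$, so the theorem produces a strictly positive $g\in B_{X^*}\subset L^1(\mu)_+$ with $\tilde S:L^1(g\,d\mu)\to L^1(g\,d\mu)$ bounded (of norm at most $2\|S\|$); in fact the construction there yields $g$ with $S'g\le 2\|S\|\,g$ $\mu$-a.e., $S'$ the K\"othe adjoint, which makes the $L^1(g\,d\mu)$-bound transparent via
$$
\int|Sf|\,g\,d\mu\le\int (S|f|)\,g\,d\mu=\int|f|\,(S'g)\,d\mu\le 2\|S\|\int|f|\,g\,d\mu .
$$
For the other endpoint, since $g$ is strictly positive the measures $g\,d\mu$ and $\mu$ have the same null sets, hence $L^\infty(g\,d\mu)=L^\infty(\mu)$ isometrically; from $|Sf|\le S|f|\le\|f\|_\infty\,S\chi_\Omega$ together with $S\chi_\Omega\in L^\infty(\mu)$ one gets that $\tilde S$ is bounded on $L^\infty(g\,d\mu)$ as well. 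Finally, the Riesz--Thorin interpolation theorem on the finite measure space $(\Omega,g\,d\mu)$ gives boundedness of $\tilde S$ on every intermediate $L^p(g\,d\mu)$, $1<p<\infty$, and therefore the same for $\tilde T$ on the whole scale.

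The main obstacle is precisely that a single weight must serve the entire range $1\le p\le\infty$: applying Theorem~\ref{endop-reg} separately at each $p$ would only give a $p$-dependent weight, and such weights cannot be amalgamated in general. The point to exploit is that regularity of $T$ collapses everything to the single pointwise domination $|Tf|\le S|f|$, after which only the two endpoints $p=1$ and $p=\infty$ really matter; the $p=1$ weight furnished by Theorem~\ref{endop-reg} then also governs the $L^\infty$-behaviour through the weight-free estimate $|Sf|\le\|f\|_\infty S\chi_\Omega$, and interpolation does the rest.
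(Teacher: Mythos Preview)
Your overall strategy---get the weight from the $p=1$ case of Theorem~\ref{endop-reg}, obtain the $L^\infty$ endpoint separately, and interpolate---is exactly the paper's. The reduction to the positive majorant $S=T_1+T_2$ is a harmless simplification. The gap is in your $L^\infty$ step: you assert that $S\chi_\Omega\in L^\infty(\mu)$, but all you know is $S\chi_\Omega\in X$, and for a general Banach function space $X$ this need not be bounded. For instance, on $X=L^1[0,1]$ the positive rank-one operator $Sf(x)=x^{-1/2}\int_0^1 f(t)\,dt$ is bounded, yet $S\chi_{[0,1]}(x)=x^{-1/2}\notin L^\infty$. So the estimate $|Sf|\le\|f\|_\infty\,S\chi_\Omega$ does not give an $L^\infty$ bound, and the argument breaks down precisely at the endpoint you need.

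The paper obtains the $L^\infty$ endpoint differently: it applies the $p=1$ case of Theorem~\ref{endop-reg} a second time, now to the (regular) adjoint $T'$ acting on the K\"othe dual $X'$. This produces a strictly positive $g'$ with $\tilde T':L^1(g'\,d\mu)\to L^1(g'\,d\mu)$ bounded, which by duality says $\tilde T:L^\infty(g'\,d\mu)\to L^\infty(g'\,d\mu)$ is bounded. Since any two strictly positive weights give equivalent measures and hence the \emph{same} $L^\infty$ space, this is automatically boundedness on $L^\infty(g\,d\mu)$ for the original weight $g$, and Riesz--Thorin then applies on $(\Omega,g\,d\mu)$ as you intended. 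In short, replace your unjustified pointwise bound at $p=\infty$ by the duality argument and the proof goes through.
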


\begin{proof}
We apply Theorem \ref{endop-reg} for $p=1$. Note $X$ is 1-convex and if $T$ is regular, then it is in particular $1$-regular. Hence, there is a strictly positive functional $g\in X^*\subset L^1(\mu)$ such that
$$
\tilde T:L^1(g d\mu)\to L^1(g d\mu)
$$
is bounded. The same argument applied to $T^*$ yields that for some $g'\in X$, the operator
$$
\tilde T^*:L^1(g' d\mu)\to L^1(g' d\mu)
$$
is also bounded, or equivalently,
$$
\tilde T:L^\infty(g d\mu)\to L^\infty(g d\mu)
$$
is bounded. By a standard interpolation argument, it follows that
$$
\tilde T:L^p(g d\mu)\to L^p(g d\mu)
$$
is bounded for every $1\leq p\leq \infty$, as claimed.
\end{proof}

\section{Lattice $p$-summing operators}\label{s:latticep}

Recall an operator $T:E \to Y$ from a Banach space $E$ to a Banach lattice $Y$ is called lattice $p$-summing for some $1\leq p<\infty$ if there is a constant $C>0$ such that for all $x_1, \ldots, x_n \in E$, we have
$$
\bigg\|  \Big( \sum_{i=1}^n |T(x_i)|^p  \Big)^{1/p} \bigg\| \le C \,\sup_{(a_i) \in B_{\ell^{p'}} }\bigg\| \sum_{i=1}^n a_i x_i \bigg\|= C\sup_{x^* \in B_{E^*} } \Big(\sum_{i=1}^n |\langle x_i,x^*\rangle|^p\Big)^{1/p}.
$$
This class of operators was introduced and analyzed in \cite{nie}.

Proposition \ref{kyfan:lat}, in the case that $Y$ is  a $p$-convex Banach function lattice, yields that $T:E\rightarrow Y$ is lattice $p$-summing if and only if
for every $y' \in (Y_{[p]})^*$ there is a regular probability measure $\eta \in \mathcal M(B_{E^*})$ such that
$$
|\langle |T(x)|^p,  y' \rangle| \le C^p \, \int_{B_{E^*}}  | \langle x, x' \rangle|^p \, d \eta(x'),  \quad x \in E.
$$

As a direct consequence we have the following factorization theorem (in the case that $Y$ is a $p$-convex space). Some related results, without assuming $p$-convexity on $E$, can be found in \cite{dan, nie}. Moreover, for the case of $Y$ being $p$-concave, a more general class of operators has been characterized in \cite[Theorem 2.5]{pala} along similar lines.

\begin{proposition} \label{qsupimproving}
Consider an operator $T:E \to Y(\mu)$, where $E$ is a Banach space and $Y(\mu)$ is a $p$-convex order continuous Banach function space over $\mu$. The following assertions are equivalent.
\begin{itemize}
\item[(i)]
$T$ is lattice $p$-summing.

\item[(ii)]
There is $C>0$ such that for each $0 \le y^* \in B_{Y^*}=B_{Y'}$ there is a Borel probability measure on $B_{E^*}$ such that for every $x_1,...,x_n \in E$,
$$
 \Big( \int_{\Omega} |T(x)|^p \, y^*\, d \mu \Big)^{1/p}
\le C \,
\Big( \int_{B_{E^*}} |\langle x, x^* \rangle|^p \, d \eta(x^*) \Big)^{1/p}.
$$
\item[(iii)]
For each $0 \le y^* \in B_{(Y_{[p]})'}$ there is a Borel probability measure on $B_{E^*}$ such that the operator $i \circ T:E \to L^p( y^* d \mu)$ factors through a subspace $S$ of $L^{p}(\eta)$ as
$$
\xymatrix{
E \ar[rr]^{T} \ar@{.>}[d]_{i} & &   Y(\mu) \ar[rr]^{i} & &  L^p( y^* d \mu), \\
 C(B_{E^*}) \ar@{.>}[rr]^{i} &  & S \ar@{.>}[urr]^{\tilde T}
& & }
$$
and the $p$-summing norm is uniformly bounded for all $0 \le y^* \in B_{(Y_{[p]})'}.$
\end{itemize}
\end{proposition}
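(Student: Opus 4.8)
The plan is to mimic the proof scheme already established for the $p$-regular case in Proposition \ref{pconvexop}, replacing the role of Ky-Fan's Lemma on $B_{(X_{[p]})^*}$ by the version in Proposition \ref{kyfan:lat}, where the relevant compact convex set is $\mathcal M(B_{E^*})$, the set of regular probability measures on $B_{E^*}$. The equivalence (i) $\Leftrightarrow$ (ii) is essentially a restatement of Proposition \ref{kyfan:lat}: starting from $T$ lattice $p$-summing, for each $0\le y^*\in B_{Y^*}=B_{Y'}$ one views $y^*$ as an element of $B_{(Y_{[p]})'}$ acting on $|T(x)|^p$ via $\langle |T(x)|^p,y^*\rangle=\int_\Omega |T(x)|^p y^*\,d\mu$ (here order continuity and $p$-convexity with constant $1$ guarantee $(Y_{[p]})^*=(Y_{[p]})'$ and that $Y_{[p]}$ is a genuine Banach function space), and Proposition \ref{kyfan:lat} yields a regular probability measure $\eta\in\mathcal M(B_{E^*})$ with
$$
\int_\Omega |T(x)|^p\,y^*\,d\mu=|\langle |T(x)|^p,y^*\rangle|\le \lambda_p(T)^p\int_{B_{E^*}}|\langle x,x^*\rangle|^p\,d\eta(x^*),\quad x\in E,
$$
which is exactly (ii) with $C=\lambda_p(T)$. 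Conversely, if (ii) holds then testing against the finitely many $x_1,\dots,x_n$ and summing, then taking the supremum over $0\le y^*\in B_{Y'}$ (which is norming for the positive element $\sum_i|T(x_i)|^p\in Y_{[p]}$, hence for its $p$-th root in $Y$), recovers inequality \eqref{in1} with constant $C$, so $\lambda_p(T)\le C$.

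For (ii) $\Leftrightarrow$ (iii) the idea is the standard Pietsch-type factorization. Given $y^*$ and the associated probability measure $\eta$ on $B_{E^*}$, the inequality in (ii) says precisely that the composition $x\mapsto T(x)\mapsto i(T(x))\in L^p(y^*d\mu)$, when $E$ is first mapped into $C(B_{E^*})$ by the canonical isometric embedding $x\mapsto\langle x,\cdot\rangle$ and then into $L^p(\eta)$, has the property that $\|i\circ T(x)\|_{L^p(y^*d\mu)}\le C\|\langle x,\cdot\rangle\|_{L^p(\eta)}$. Hence $i\circ T$ factors through the closed subspace $S\subseteq L^p(\eta)$ which is the $L^p(\eta)$-closure of the image of $C(B_{E^*})$, via a well-defined bounded operator $\tilde T:S\to L^p(y^*d\mu)$ of norm $\le C$; the composition $C(B_{E^*})\to L^p(\eta)$ being the formal inclusion, one checks it is $p$-summing with $p$-summing norm controlled by $\eta(B_{E^*})^{1/p}=1$, uniformly in $y^*$. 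For the converse (iii) $\Rightarrow$ (ii): if such a factorization exists, then for $x\in E$,
$$
\Big(\int_\Omega|T(x)|^p\,y^*\,d\mu\Big)^{1/p}=\|i\circ T(x)\|_{L^p(y^*d\mu)}=\|\tilde T(i(\langle x,\cdot\rangle))\|\le\|\tilde T\|\,\|\langle x,\cdot\rangle\|_{L^p(\eta)}=\|\tilde T\|\Big(\int_{B_{E^*}}|\langle x,x^*\rangle|^p\,d\eta\Big)^{1/p},
$$
and uniform boundedness of these norms over $y^*$ gives (ii) with a uniform $C$.

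The step I expect to require the most care is the bookkeeping around the two dual identifications and norming sets: one must justify that $(Y_{[p]})^*$, $(Y_{[p]})'$ and the positive part of $B_{Y'}$ interact correctly, i.e.\ that $0\le y^*\in B_{Y'}$ precisely corresponds to $0\le y^*\in B_{(Y_{[p]})'}$ acting on $p$-th powers (this uses $p$-convexity constant $1$ so that $\||y|^p\|_{Y_{[p]}}=\|y\|_Y^p$), and that the family of such $y^*$ is norming for positive elements of $Y_{[p]}$ — order continuity is what makes this work. Everything else is a routine transcription of the argument in the proof of Proposition \ref{pconvexop} together with the classical Pietsch factorization, so I would state those parts briefly and refer to Proposition \ref{kyfan:lat} and to \cite{DJT} for the $p$-summing factorization through a subspace of $L^p(\eta)$. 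A minor point worth flagging explicitly is that, unlike in the $p$-regular setting, here the intermediate space $S$ need not be all of $L^p(\eta)$ and the inclusion $C(B_{E^*})\hookrightarrow S$ carries the $p$-summing behaviour; this is why (iii) is phrased in terms of factoring through a subspace $S$ rather than through $L^p(\eta)$ itself.
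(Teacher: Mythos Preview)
Your approach is essentially the same as the paper's: the equivalence (i) $\Leftrightarrow$ (ii) is obtained directly from Proposition~\ref{kyfan:lat}, and (ii) $\Leftrightarrow$ (iii) is the Pietsch factorization theorem applied to the composition $i\circ T:E\to L^p(y^*d\mu)$, with uniform control of the $p$-summing norm. The paper's proof is only a few lines and invokes exactly these two ingredients; the extra technical identity $(Y(\mu)_{[p]})'=M(Y(\mu),L^p(\mu))_{[p]}$ mentioned there is not actually used in this argument.

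One small point: the effort you spend trying to reconcile $0\le y^*\in B_{Y'}$ in (ii) with $0\le y^*\in B_{(Y_{[p]})'}$ in (iii) is misplaced, and your claim that these ``precisely correspond'' is not correct in general (for $p>1$ the spaces $Y'$ and $(Y_{[p]})'$ are genuinely different). The discrepancy is simply a typo in the statement of (ii); the paper's own proof treats (ii) as if it read $y^*\in B_{(Y_{[p]})'}$, since that is what Proposition~\ref{kyfan:lat} delivers. With that understood, no additional bookkeeping is needed.
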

\begin{proof}
Let us explain first  a technical relation between the K\"othe dual  $(Y(\mu)_{[p]})'$ of the $p$-th power  of a $p$-convex Banach function space $Y(\mu)$ and the space of pointwise multiplication operators $M(Y(\mu),L^p(\mu))$ that will be used in the proof below and also in some other proofs of the present paper. If $Y(\mu)$ is such a function space, we have that
$$
(Y(\mu)_{[p]})' = M(Y(\mu),L^p(\mu))_{[p]}
$$
isometrically. Direct computations give the proof; the reader can find more information about this in \cite[Chapter 2]{libro}.

The equivalence (i) $\Leftrightarrow$ (ii) is given in Proposition \ref{kyfan:lat}. For (ii) $\Rightarrow$ (iii), just take into account that for each $y^* \in  B_{(Y_{[p]})'}$, we have that the property in (ii) means that $i \circ T$ is well-defined and $p$-summing with norm that is uniform for every element in $B_{(Y_{[p]})'}$. Consequently, by Pietsch factorization theorem (cf. \cite[2.13]{DJT}), we get the factorization in (iii). Finally, if (iii) holds we directly get (ii) for a constant $C>0$ as a consequence of the factorizations and the uniform bound for the $p$-summing norms.
\end{proof}

Let us remark that, given a Banach space $E$ and a probability regular Borel measure $\eta$ on a compact Hausdorff space $K$, the formal inclusion $i:C(K) \to L^p(\eta)$ is not only the canonical $p$-summing operator, but also the
prototype of lattice $p$-summing map. This is because in an $L^p$-space, we have
$$
\Big\| \Big( \sum_{i=1}^n |x_i|^p \Big)^{1/p} \Big\|_{L^p(\eta)} = \Big( \sum_{i=1}^n \big\|x_i \big\|_{L^p(\eta)}^p \Big)^{1/p}  \le
\sup_{x^* \in B_{\mathcal M(K)}}  \Big( \sum_{i=1}^n \big| \langle x_i,x^* \rangle \big|^p \Big)^{1/p}.
$$

The following is a direct consequence of the definitions.  Let $1 \le p$ and let $1/p + 1/p' =1.$
Recall that an  operator $T:E \to F$ between Banach spaces is $p$-dominated if there is a constant $K$ such that
 every finite sets $x_1,\ldots,x_n \in E,$ $y_1^*,\ldots,y_n^* \in F^*,$
$$
\sum_{i=1}^n \big| \langle T(x_i), y_i^* \rangle  \big|  \le
K \, \sup_{x^* \in B_{E^*}} \Big( \sum_{i=1}^n \big|\langle x_i, x^* \rangle \big|^p \Big)^{1/p} \sup_{y \in B_{F}} \Big( \sum_{i=1}^n \big|\langle y, y_i^* \rangle \big|^{p'} \Big)^{1/p'}.
$$
 By Kwapie\'n's Factorization Theorem (cf. \cite[19.3]{deflo}), an operator $T$ is $p$-dominated if it can be factored as $T=R \circ S$ where $S$ is $p$-summing and $R^*$ is $p'$-summing (with $\frac1p+\frac1{p'}=1$).

\begin{remark} \label{eso}
Let $X$, $Y$ be Banach lattice and let $E$, $F$ be Banach spaces.
Let $S_0:E \to X$ be
lattice $p$-summing, $T_0:X \to Y$  $p$-regular, and $R_0: Y \to F$ is such that $R_0^*:F^* \to Y^*$ is lattice $p'$-summing.

Then $T:E \to F$ defined as $T=R_0 \circ T_0 \circ S_0$ is $p$-dominated.

\end{remark}
\begin{proof}
Using \cite[Proposition 1.d.2.(iii)]{LT2}, it follows that
\begin{eqnarray*}
\sum_{i=1}^n \langle T(x_i), y_i^* \rangle & = & \sum_{i=1}^n \langle  T_0 \circ S_0(x_i), R_0^*(y_i^*) \rangle\\
& \le & \bigg\langle\Big(\sum_{i=1}^n  |T_0 \circ S_0(x_i)|^p\Big)^{1/p}, \Big(\sum_{i=1}^n |R_0^*(y_i^*)|^{p'}\Big)^{1/p'}\bigg\rangle \\
& \le & \rho_p(T_0) \Big\| \Big( \sum_{i=1}^n |S_0(x_i)|^p \Big)^{1/p} \Big\| \, \Big\| \Big( \sum_{i=1}^n |R_0^*(y_i^*)|^{p'} \Big)^{1/p'} \Big\| \\
& \le &  \rho_p(T_0)\lambda_p(S_0)\lambda_{p'}(R_0^*) \sup_{x^* \in B_{E^*}}  \Big( \sum_{i=1}^n |\langle x_i, x^* \rangle|^p \Big)^{\frac{1}{p}} \,  \sup_{y \in B_{F}}  \Big( \sum_{i=1}^n |\langle y, y_i^* \rangle|^{p'} \Big)^{\frac{1}{p'}}.
\end{eqnarray*}
The conclusion follows.
\end{proof}

However, Remark \ref{eso} can be improved in order to give a characterization of a certain class of $p$-dominated operators.
Next result gives an equivalent statement to Kwapie\'n's Factorization Theorem for $p$-dominated operators (see \cite[19.3]{deflo}), in a Banach lattice context, for a special class of $p$-factorable operators (see \cite[18.6]{deflo}). The main difference with Kwapie\'n's result is that the factorization here is given through $L^p$-spaces, and not only through a subspace of an $L^p$-space.

\begin{corollary} \label{thqsuppreg}
Let $T:E \to F$ be an operator between Banach spaces $E$ and $F$, and $1< p < \infty$. The following assertions are equivalent.
\begin{itemize}
\item[(i)]
There exist a $p$-convex order continuous Banach function space $X(\mu)$ and a $p$-concave Banach function space $Y(\nu)$ with order continuous dual such that $T$ factors as $T=R_0 \circ T_0 \circ S_0$,
$$
\xymatrix{
E \ar[rr]^{T} \ar@{.>}[d]_{S_0} & &  F,\\
X(\mu)  \ar@{.>}[rr]^{T_0}&  & Y(\nu)
\ar@{.>}[u]_{R_0}}
$$
where $S_0$ is lattice $p$-summing, $T_0$ is $p$-regular and $R_0^*$
is lattice $p'$-summing.

\item[(ii)]
There is a factorization for $T$  as
$$
\xymatrix{
E \ar[rr]^{T} \ar@{.>}[d]_{S_1} & &   F\\
L^p(\mu)  \ar@{.>}[rr]^{T_1}&  & L^p(\nu)
\ar@{.>}[u]_{R_1}}
$$
where $S_1$ is $p$-summing and $R_1^*$ is $p'$-summing.

\end{itemize}

Thus, if the equivalent statements (i) and (ii) hold, then $T$ is both $p$-dominated and $p$-factorable.

\end{corollary}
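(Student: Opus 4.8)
\emph{Strategy.} The plan is to prove the equivalence (i)$\Leftrightarrow$(ii) and then read off the last sentence; the crux will be the observation that (ii) is nothing but Kwapie\'n's factorization of a $p$-dominated operator, rewritten so that the two intermediate spaces are genuine $L^p$-spaces rather than subspaces of $L^p$-spaces. The implication (ii)$\Rightarrow$(i) is then essentially formal: I would take $X(\mu):=L^p(\mu)$, which is a $p$-convex (constant $1$) order continuous Banach function space, and $Y(\nu):=L^p(\nu)$, which for $1<p<\infty$ is $p$-concave and has order continuous dual $L^{p'}(\nu)$ (replacing the $L^p$-spaces by $L^p$-spaces over finite measure spaces if needed, which is harmless since $p$-summing operators have separable range). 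Putting $S_0:=S_1$, $T_0:=T_1$, $R_0:=R_1$, all the required properties follow from the identity $\|(\sum_i|g_i|^p)^{1/p}\|_{L^p}=(\sum_i\|g_i\|_{L^p}^p)^{1/p}$: a $p$-summing map into an $L^p$-space is automatically lattice $p$-summing, a bounded map between $L^p$-spaces is automatically $p$-regular, and a $p'$-summing map $F^*\to L^{p'}(\nu)$ is automatically lattice $p'$-summing into the $p'$-convex lattice $L^{p'}(\nu)$; hence $T=R_0T_0S_0$ witnesses (i).

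\emph{The direction (i)$\Rightarrow$(ii).} First I would apply Remark~\ref{eso}, whose hypotheses are precisely the data of (i), to conclude that $T=R_0T_0S_0$ is $p$-dominated. Next I would invoke Kwapie\'n's Factorization Theorem (cf.\ \cite[19.3]{deflo}) in its multiplication-operator form: there are a probability measure $\eta$, a function $0\le g\in L^1(\eta)$ and bounded operators $A:E\to L^\infty(\eta)$, $B:L^1(\eta)\to F$ with $T=B\circ M_g\circ A$. Splitting the weight as $g=g^{1/p}g^{1/p'}$ and setting
\[
S_1:=M_{g^{1/p}}\circ A:E\to L^p(\eta),\qquad T_1:=\mathrm{id}_{L^p(\eta)},\qquad R_1:=B\circ M_{g^{1/p'}}:L^p(\eta)\to F,
\]
one gets $R_1T_1S_1=B\,M_{g^{1/p'}}M_{g^{1/p}}A=B\,M_gA=T$, which is the diagram of (ii) with $\mu=\nu=\eta$. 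The outer maps have the right summability by the usual argument: $M_{g^{1/p}}:L^\infty(\eta)\to L^p(\eta)$ is $p$-summing since $\sum_i\|g^{1/p}f_i\|_{L^p}^p=\int g\sum_i|f_i|^p\,d\eta\le\|g\|_1\,\|(\sum_i|f_i|^p)^{1/p}\|_\infty^p$ and $\|(\sum_i|f_i|^p)^{1/p}\|_\infty\le\sup_{x^*\in B_{(L^\infty)^*}}(\sum_i|\langle f_i,x^*\rangle|^p)^{1/p}$; hence $S_1$ is $p$-summing, and symmetrically $R_1^*=M_{g^{1/p'}}\circ B^*:F^*\to L^\infty(\eta)\to L^{p'}(\eta)$ is $p'$-summing. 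The last assertion is then immediate: $T$ is $p$-dominated by the above (or directly from (ii), since $T=R_1\circ(T_1S_1)$ with $T_1S_1$ $p$-summing and $R_1^*$ $p'$-summing), and reading the diagram of (ii) as $T=(R_1T_1)\circ S_1$ shows that $T$ factors through the $L^p$-space $L^p(\mu)$, so it is $p$-factorable.

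\emph{Where the difficulty lies.} The one genuinely non-formal input is the multiplication-operator form of Kwapie\'n's theorem, and I expect this to be the main obstacle: it is the only point where a real theorem, rather than bookkeeping with $L^p$-norms, is used, and it is also exactly what upgrades the ``subspace of $L^p$'' factorization available for an arbitrary $p$-dominated operator to a factorization through honest $L^p$-spaces. A more hands-on route — avoiding Kwapie\'n and instead producing the two $L^p$-weights directly from (i), by applying Proposition~\ref{qsupimproving} to $S_0$ and to $R_0^*$ and a Ky--Fan separation (as in Proposition~\ref{kyfan:reg}) to the $p$-regular $T_0$ — does not work in general: one checks that forcing the weight on the $Y$-side to serve simultaneously the $R_0^*$-step and the weighted estimate for $T_0$ makes the pointwise multiplier linking them an isomorphism $L^{p'}\to Y(\nu)'$, i.e.\ $Y(\nu)'\cong L^{p'}$, which fails for a general $p$-concave $Y(\nu)$. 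Thus passing through ``$T$ is $p$-dominated'' and Kwapie\'n's theorem seems essential, and is precisely what makes the passage from a subspace of $L^p$ to a genuine $L^p$ painless.
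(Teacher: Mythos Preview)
Your direction (ii)$\Rightarrow$(i) is correct and matches the paper's.  The gap is in (i)$\Rightarrow$(ii).

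The ``multiplication-operator form of Kwapie\'n's Theorem'' you invoke --- a factorization $T=B\circ M_g\circ A$ with $A:E\to L^\infty(\eta)$, $g\in L^1(\eta)_+$, $B:L^1(\eta)\to F$ bounded --- is \emph{not} a characterization of $p$-dominated operators.  A map factoring through $M_g:L^\infty(\eta)\to L^1(\eta)$ is (Pietsch-)integral, and for $p>1$ this is a strictly smaller ideal than the $p$-dominated operators.  What Kwapie\'n's theorem (the result actually in \cite[19.3]{deflo}) gives is $T=R\circ S$ with $S$ $p$-summing and $R^*$ $p'$-summing; the Pietsch factorizations of $S$ and $R^*$ only route them through a \emph{subspace} of an $L^p$-space and a \emph{quotient} of an $L^p$-space, respectively.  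There is no mechanism to extend the intermediate map to the whole $L^p$-space, so from ``$T$ is $p$-dominated'' alone you cannot reach (ii).  This is exactly the phenomenon the paragraph preceding the corollary warns about: (ii) is a genuine strengthening of Kwapie\'n's conclusion, requiring factorization through honest $L^p$-spaces, and it also forces $T$ to be $p$-factorable, which $p$-dominated alone does not.

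The paper's proof does not pass through ``$T$ is $p$-dominated'' at all.  It works directly with the data of (i) and applies the Maurey--Rosenthal theorem to the $p$-regular map $T_0:X(\mu)\to Y(\nu)$: the $p$-convexity of $X(\mu)$ together with the $p$-concavity of $Y(\nu)$ yields multiplication operators $M_{h^{1/p}}:X(\mu)\to L^p(\mu)$ and (dually) $M_{g^{1/p}}$ on the $Y$-side so that $T_0$ factors through $L^p(\mu)\to L^p(\nu)$.  One then sets $S_1=M_{h^{1/p}}\circ S_0$ and $R_1^*=M_{g^{1/p}}\circ R_0^*$; since multiplication into $L^p$ following a lattice $p$-summing map is $p$-summing (Proposition~\ref{qsupimproving}), these have the required summability.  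In other words, the convexity/concavity hypotheses on $X(\mu)$ and $Y(\nu)$ are precisely what produces the genuine-$L^p$ factorization, and that information is discarded the moment you reduce to ``$T$ is $p$-dominated''.  The ``hands-on route'' you rejected is, in essence, what the paper does --- the point you missed is that Maurey--Rosenthal handles both sides of $T_0$ at once, rather than trying to match a weight chosen for $R_0^*$ with one chosen independently for $T_0$.
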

\begin{proof}
(i) $\Rightarrow$ (ii) A standard application of the Maurey-Rosenthal Theorem provides a factorization of $T_0$ as $M_{h^{1/p}} \circ \hat T_0 \circ M_{g^{1/p}}$, where $M_{h^{1/p}}$ is a multiplication operator $M_{h^{1/p}}:X(\mu) \to L^p(\mu)$ and $M_{g^{1/p}}: Y'(\nu) \to L^p(\nu)$, that is $h \in (X(\mu)_{[p]})'$ and $g \in (Y'(\nu)_{[p]})'$ (see \cite[Theorem 3.1]{defasan}). Proposition \ref{qsupimproving} gives that the compositions $S_1=M_{h^{1/p}} \circ S_0$ and $R_1* = M_{g^{1/p}} \circ R_0^*$ are $p$-summing and $p'$-summing, respectively, and so we get (ii).

The converse is straightforward, since every operator among $L^p$-spaces is $p$-regular, and every operator into an $L^p$-space is $p$-summing if and only if it is lattice $p$-summing.
\end{proof}

\section{Conjugate weights for operators on families of weighted $L^p$-spaces} \label{existence}

Fix a finite measure $\mu$.
Suppose that we have an operator $T$ that is well defined from an ideal of $L^0(\mu)$ to $L^p(v d\mu)$,
for $v$ belonging to a given family of weights $V\subset L^1(\mu)_+$. As we said in the Introduction, we are interested in solving the following question: \textit{Is there another family of weights $W$ satisfying that for every $v \in V$ there is $w \in W$ such that $T:L^p(w d \mu) \to L^p(v d \mu)$ is well-defined and continuous with a uniform bound for $v\in V$? }

According to the classes of operators we considered in the previous sections, we will give two solutions. The first one will give an average estimate for such a domination, that will provide a factorization through a subspace of an $L^p$ space. The second one will give an exact solution to the problem, together with a description of the corresponding conjugate class of weights.

In order to do this, we will need some tools based on vector measures. The following definition can be found in \cite[S.5]{sancheztradacetepositivelynorming}.

\begin{definition}
Let $(\Omega, \Sigma, \mu)$ be a finite measure space.
Given a norm bounded subset $V\subset L^1(\mu)$, we define the (finitely additive) vector measure $m_V: \Sigma \to \ell^\infty(V),$
\begin{equation} \label{eqvecme}
m_V(A):=  \Big( \int_A v \, d \mu  \Big)_{v \in V} \in \ell^\infty(V), \quad A \in \Sigma.
\end{equation}
\end{definition}

It was shown in \cite[Lemma 5.1]{sancheztradacetepositivelynorming} that if $V$ is a relatively weakly compact set of $L^1(\mu)_+$ ---equivalently, uniformly integrable---, then $m_V$ defined above is a countably additive vector measure.

Consider the space  $L^p(m_V)$ of $p$-integrable functions with respect to $m_V$. By the definition of the vector measure $m_V$, we clearly have that
$$
\|f\|_{L^p(m_V)}= \sup_{v \in V} \|f\|_{L^p(v d \mu)}, \quad f \in L^p(m_V).
$$
Therefore, the formal inclusions
$$
L^p(m_V) \hookrightarrow L^p( v d \mu),
$$
are bounded for every $v\in V$ with norm not greater than 1.

For each weight $v \in V$, let $e_v^*\in\ell_1(V)$ denote the corresponding element of the unit vector basis, and let $\varphi_v=e_v^*\circ I_{m_V}\in L^1(m_V)^*$. That is,
$$
\varphi_v(f)=  \int_\Omega f v d\mu,\quad \text{for}\, f \in L^1(m_V).
$$
It follows that
$$
\varphi_v(f)= \int_\Omega f v d\mu = \Big\langle \int_{\Omega} f d m_V, e_v^* \Big\rangle
\le \|f\|_{L^1(m_V)} \, \|e_v^*\|_{\ell^1(V)} = \|f\|_{L^1(m_V)}.
$$

The keystone of our approach in this section will be to consider the space $L^p(m_V)$ as the natural order continuous $p$-convex Banach function space which works globally as the range for an operator $T$ that is well-defined when taking values in each $L^p( v d \mu)$ space, for $v \in V$.

\subsection{Average domination for a given class of weights}

Throughout the rest of the paper let $1 \le p < \infty$ be fixed. Suppose that $T:X(\mu_1) \to Y(\mu_2) $ is a lattice $p$-summing operator, where $X(\mu_1)$, $Y_(\mu_2)$ are order continuous Banach function lattices over $(\Omega_1,\mu_1)$ and $(\Omega_2,\mu_2)$ respectively. Assume moreover that $Y(\mu_2)$ is $p$-convex.
Using duality for the spaces $X(\mu_1)$ and $Y(\mu_2)$, Proposition \ref{kyfan:lat} yields that
for every $y' \in B_{(Y(\mu_2)_{[p]})^*}$ there is a regular probability measure $\eta \in \mathcal M(B_{X(\mu_1)^*})$ such that
$$
\Big(\int_{\Omega_2} |T(x)|^p  y' d \mu_2 \Big)^{1/p} \le \lambda_p(T) \, \Big(\int_{B_{X(\mu_1)^*}}  \Big| \int_{\Omega_1} x \, x' \, d \mu_1  \Big|^p  \, d \eta(x')\Big)^{1/p}, \quad x \in X(\mu_1).
$$

This can be rewritten in the following terms: for every $0 \le y' \in B_{(Y(\mu_2)_{[p]})^*}$ there is a regular Borel probability measure $\eta$ and a subspace $S=\{ f_x(x'):=\int_{\Omega_1} x \, x' \, d \nu: x \in X(\mu_1)\} \subset L^p(B_{X(\mu_1)^*},\eta)$ such that the following diagram commutes:
$$
\xymatrix{
X(\mu_1) \ar[rr]^{T}  \ar@{.>}[d]_{I} & &   Y(\mu_2)  \ar@{.>}[d]_{i}\\
  S \ar@{.>}[rr]^{T_0}  &  & L^p(y' d \mu_2)
}
$$

We summarize this in the following

\begin{corollary}
Let $X(\mu_1)$, $Y(\mu_2)$ be order continuous Banach function lattices such that $Y(\mu_2)$ is $p$-convex, and a bounded operator $T:X(\mu_1) \to Y(\mu_2)$. For any norm bounded set of weights $V\subset (Y(\mu_2)_{[p]})'$, the following statements are equivalent.
\begin{itemize}

\item[(i)] The operator $T$ is  lattice $p$-summing.

\item[(ii)]  For each $v \in V$, the operators $ T:X(\mu_1) \to L^p(v \, d \mu_2)$ factor through the subspace $S$ defined above, considered as a subspace of $L^p(B_{X(\mu_1)'},\eta)$ for a certain regular probability measure $\eta$.

\end{itemize}

\end{corollary}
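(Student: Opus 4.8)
The plan is to read this Corollary as the formal record of the discussion immediately preceding it: implication (i) $\Rightarrow$ (ii) is a direct unpacking of Proposition \ref{kyfan:lat}, and (ii) $\Rightarrow$ (i) is a short averaging estimate. Before starting I would normalize: since $V$ is norm bounded in $(Y(\mu_2)_{[p]})'$, after rescaling I may assume $V\subset B_{(Y(\mu_2)_{[p]})'}$, and, being weights, $0\le v$ for $v\in V$. I write $i_v\colon Y(\mu_2)\to L^p(v\,d\mu_2)$ for the formal inclusion, which has norm $\le 1$ because $\int_{\Omega_2}|y|^pv\,d\mu_2=\langle|y|^p,v\rangle\le\||y|^p\|_{Y(\mu_2)_{[p]}}=\|y\|_{Y(\mu_2)}^p$, and $I\colon X(\mu_1)\to C(B_{X(\mu_1)^*})$, $Ix=f_x$ with $f_x(x')=\langle x,x'\rangle$, which has norm $\le 1$ and, post-composed with the canonical map into $L^p(\eta)$, takes values in $S$.

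For \textbf{(i) $\Rightarrow$ (ii)} I would apply Proposition \ref{kyfan:lat} to $T$ with $y^*=v\in B_{(Y(\mu_2)_{[p]})^*}$, obtaining a regular probability $\eta_v$ on $B_{X(\mu_1)^*}$ with
$$
\int_{\Omega_2}|Tx|^p v\,d\mu_2\ \le\ \lambda_p(T)^p\int_{B_{X(\mu_1)^*}}|\langle x,x'\rangle|^p\,d\eta_v(x'),\qquad x\in X(\mu_1).
$$
Reading $\eta_v$ as the measure defining the $L^p$-norm on $S$, this inequality says exactly that the rule $f_x\mapsto i_v(Tx)$ is well defined on the dense subset $\{f_x:x\in X(\mu_1)\}$ of $S$ (if $\|f_x-f_{x'}\|_{L^p(\eta_v)}=0$ then $\|T(x-x')\|_{L^p(v\,d\mu_2)}=0$) and bounded there by $\lambda_p(T)$; its continuous extension $T_0\colon S\to L^p(v\,d\mu_2)$ satisfies $T_0\circ I=i_v\circ T$, which is the commuting square of (ii), with factorization norm $\le\lambda_p(T)$ \emph{independently of $v$}.

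For \textbf{(ii) $\Rightarrow$ (i)} I would fix $x_1,\dots,x_n\in X(\mu_1)$ and $v\in V$, let $T_0^{(v)}\colon S\to L^p(v\,d\mu_2)$ be the factoring operator, and use that in an $L^p$-space the $p$-th power of the lattice expression splits as a sum:
\begin{align*}
\int_{\Omega_2}\Big(\sum_{i=1}^n|Tx_i|^p\Big)v\,d\mu_2
=\sum_{i=1}^n\|Tx_i\|_{L^p(v\,d\mu_2)}^p
&\le \|T_0^{(v)}\|^p\sum_{i=1}^n\|f_{x_i}\|_{L^p(\eta_v)}^p\\
&=\|T_0^{(v)}\|^p\int_{B_{X(\mu_1)^*}}\sum_{i=1}^n|\langle x_i,x'\rangle|^p\,d\eta_v(x')\\
&\le \|T_0^{(v)}\|^p\sup_{x'\in B_{X(\mu_1)^*}}\sum_{i=1}^n|\langle x_i,x'\rangle|^p,
\end{align*}
the last step using that $\eta_v$ is a probability measure. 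I would then take the supremum over $v\in V$ and, invoking that $V$ is norming for $(Y(\mu_2)_{[p]})'_{+}$ so that $\sup_{v\in V}\langle g,v\rangle=\|g\|_{Y(\mu_2)_{[p]}}$ for $0\le g\in Y(\mu_2)_{[p]}$ (applied to $g=\sum_i|Tx_i|^p$), together with the uniform bound $C:=\sup_{v\in V}\|T_0^{(v)}\|<\infty$, conclude
$$
\Big\|\Big(\sum_{i=1}^n|Tx_i|^p\Big)^{1/p}\Big\|_{Y(\mu_2)}^p=\Big\|\sum_{i=1}^n|Tx_i|^p\Big\|_{Y(\mu_2)_{[p]}}\le C^p\sup_{x'\in B_{X(\mu_1)^*}}\sum_{i=1}^n|\langle x_i,x'\rangle|^p,
$$
i.e. $T$ is lattice $p$-summing with $\lambda_p(T)\le C$.

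The part I expect to need care is precisely this converse: modulo Proposition \ref{kyfan:lat} the forward direction is bookkeeping (well-definedness of $T_0$ on $S$, passage to the closure), whereas the estimate above only controls $\sup_{v\in V}\langle\sum_i|Tx_i|^p,v\rangle$, so to recover $\|\sum_i|Tx_i|^p\|_{Y(\mu_2)_{[p]}}$ one genuinely needs $V$ to be (positively) norming for $(Y(\mu_2)_{[p]})'$ in the sense of the Remark above, and one needs the factorization constants $\|T_0^{(v)}\|$ uniformly bounded (which the forward direction does supply, with bound $\lambda_p(T)$). I would therefore make these two requirements explicit in the hypotheses on $V$ and in the wording of (ii); without them the equivalence can fail (e.g. $V=\{0\}$ makes (ii) vacuous while (i) need not hold).
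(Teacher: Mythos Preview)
Your reading of the forward direction (i) $\Rightarrow$ (ii) is exactly the paper's: the corollary is presented without a proof, merely as ``We summarize this in the following'', and the preceding paragraph is precisely the unpacking of Proposition~\ref{kyfan:lat} that you wrote out (choose $y^*=v$, obtain $\eta_v$, and read the domination as the well-definedness and boundedness of $T_0$ on $S$). So on this half you and the paper coincide.

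Where you go further than the paper is on (ii) $\Rightarrow$ (i). The paper supplies no argument for this direction at all; the corollary is simply asserted as an equivalence. Your computation is correct, and your final paragraph pinpoints the real issue: the inequality you derive only controls $\sup_{v\in V}\langle\sum_i|Tx_i|^p,v\rangle$, so to recover the $Y(\mu_2)_{[p]}$-norm you need $V$ to be positively norming for $(Y(\mu_2)_{[p]})'$, not just norm bounded, and you need a uniform bound on the factorization constants $\|T_0^{(v)}\|$. Both are genuine requirements, and your counterexample $V=\{0\}$ shows the equivalence fails without them. In other words, you have not missed an idea---you have spotted that the corollary, as stated in the paper, is imprecise: the implication (ii) $\Rightarrow$ (i) needs exactly the two extra hypotheses you name. (The paper does use positively norming sets later, in Lemma~\ref{lem2.1} and Theorem~\ref{pth}, so the omission here seems to be an oversight in the statement rather than a different intended argument.)
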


Let us face the original problem now. For a finite measure $\mu_2$ and a set of weights $V$ that is a relatively weakly compact set in $L^1(\mu_2)_+$, consider the vector measure $m_V$ given by (\ref{eqvecme}) and the corresponding space $L^p(m_V)$.

\begin{corollary}
Let $X(\mu_1)$ be a Banach function space. Let $V\subset L^1(\mu_2)_+$ be relatively weakly compact. Let $T$ be an operator that is well-defined from $X(\mu_1)$ to each $L^p(v d \mu_2)$. The following statements are equivalent.

\begin{itemize}

\item[(i)] The operator $T: X(\mu_1) \to L^p(m_V)$ is  lattice $p$-summing.

\item[(ii)] There is a constant $C>0$ such that for every $v \in V$ there is a regular probability measure $\eta \in \mathcal M(B_{X(\mu_1)^*})$ satisfying
$$
\Big(\int_{\Omega_2} |T(x)|^p  v d \mu_2\Big)^{1/p} \le C \, \Big(\int_{B_{X(\mu_1) ^*}}  \Big| \int_{\Omega_1} x \, x' \, d \mu_1  \Big|^p  \, d \eta(x')\Big)^{1/p}, \quad x \in X(\mu_1) .
$$

\item[(iii)]  For each $v \in V$, the operator $T:X(\mu_1) \to L^p(v \, d \mu_2)$ factors through the subspace $S=\{ \langle x, \cdot \rangle: x \in X(\mu) \}\subset L^p(\eta)$ for a certain regular probability measure $\eta$ over $B_{X(\mu_1)^*}$.
\end{itemize}

\end{corollary}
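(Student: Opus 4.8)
The plan is to read this corollary off Proposition~\ref{kyfan:lat} (equivalently Proposition~\ref{qsupimproving}) applied with range space $Y(\mu_2):=L^p(m_V)$, using only two features of that space. Since $V$ is relatively weakly compact in $L^1(\mu_2)_+$, the vector measure $m_V$ of \eqref{eqvecme} is countably additive by \cite[Lemma~5.1]{sancheztradacetepositivelynorming}, so $L^p(m_V)$ is an order continuous Banach function space which is $p$-convex with $M_{(p)}(L^p(m_V))=1$ and satisfies $L^p(m_V)_{[p]}=L^1(m_V)$; moreover $\|g\|_{L^1(m_V)}=\sup_{v\in V}\int_{\Omega_2}|g|\,v\,d\mu_2$, and for every $v\in V$ the functional $\varphi_v\colon g\mapsto\int_{\Omega_2}g\,v\,d\mu_2$ lies in the positive part of $B_{(L^p(m_V)_{[p]})^*}$. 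These are the only properties of $L^p(m_V)$ I would invoke.

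For (i)$\Rightarrow$(ii) I would feed $Y=L^p(m_V)$ into Proposition~\ref{kyfan:lat}: if $T\colon X(\mu_1)\to L^p(m_V)$ is lattice $p$-summing, then for each $y^*\in B_{(L^p(m_V)_{[p]})^*}$ there is a regular probability $\eta\in\mathcal M(B_{X(\mu_1)^*})$ with $|\langle |T(x)|^p,y^*\rangle|\le\lambda_p(T)^p\int_{B_{X(\mu_1)^*}}|\langle x,\cdot\rangle|^p\,d\eta$ for all $x$; specializing to $y^*=\varphi_v$ for each $v\in V$ and observing that $\langle |T(x)|^p,\varphi_v\rangle=\int_{\Omega_2}|T(x)|^p\,v\,d\mu_2\ge 0$ gives (ii) with $C=\lambda_p(T)$. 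For (ii)$\Rightarrow$(iii), fix $v\in V$ together with the associated $\eta=\eta_v$, put $J\colon X(\mu_1)\to C(B_{X(\mu_1)^*})$, $Jx=\langle x,\cdot\rangle$ (a contraction), and let $i\colon C(B_{X(\mu_1)^*})\hookrightarrow L^p(\eta)$ be the formal inclusion, so that $S=i(J(X(\mu_1)))$ is the subspace in the statement. Then (ii) reads $\|T(x)\|_{L^p(v\,d\mu_2)}\le C\,\|iJx\|_{L^p(\eta)}$, so by linearity $T(x)$ depends on $x$ only through $iJx\in L^p(\eta)$; this yields a bounded $T_0\colon S\to L^p(v\,d\mu_2)$ with $\|T_0\|\le C$ and $T=T_0\circ i\circ J$, i.e. the factorization of (iii), with norms bounded uniformly by $C$.

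Finally, reading (iii) with a uniform bound $C$ on the factorization norms (as in Proposition~\ref{qsupimproving}(iii)), (iii)$\Rightarrow$(i) is the computation: for $x_1,\dots,x_n\in X(\mu_1)$ and each $v\in V$,
\[
\sum_{i=1}^n\int_{\Omega_2}|T(x_i)|^p\,v\,d\mu_2\le C^p\sum_{i=1}^n\|Jx_i\|_{L^p(\eta_v)}^p\le C^p\sup_{x^*\in B_{X(\mu_1)^*}}\sum_{i=1}^n|\langle x_i,x^*\rangle|^p,
\]
the middle step using that $\eta_v$ is a probability measure; taking the supremum over $v\in V$ and invoking $\|\,\cdot\,\|_{L^1(m_V)}=\sup_{v\in V}\int(\,\cdot\,)\,v\,d\mu_2$ gives $\|(\sum_i|T(x_i)|^p)^{1/p}\|_{L^p(m_V)}\le C\,\sup_{x^*\in B_{X(\mu_1)^*}}(\sum_i|\langle x_i,x^*\rangle|^p)^{1/p}$, so $T$ is lattice $p$-summing (in particular, well defined into $L^p(m_V)$) with $\lambda_p(T)\le C$. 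I expect the only delicate point to be precisely this uniformity: the Pietsch-type estimate in (ii), and the factorizations in (iii), must hold with a single constant valid for all $v\in V$; without it, (iii) would not even force $T$ to take values in $L^p(m_V)$, and the equivalence would fail. Everything else is bookkeeping with $L^p(m_V)_{[p]}=L^1(m_V)$ and the norm formula above.
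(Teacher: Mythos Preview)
Your proof is correct and follows exactly the route the paper intends: the corollary is obtained by specializing Proposition~\ref{qsupimproving} (equivalently Proposition~\ref{kyfan:lat}) to $Y=L^p(m_V)$ and testing against the functionals $\varphi_v\in B_{(L^1(m_V))'}$, which is precisely what you do. Your remark that (iii) must be read with a uniform bound on the factorization norms is well taken and matches the explicit uniformity clause in Proposition~\ref{qsupimproving}(iii); without it the implication (iii)$\Rightarrow$(i) would indeed fail.
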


\subsection{Existence of a conjugate set for a given family of weights} \label{conjugate}

In this section we improve the domination given in the previous one under the assumption of $p$-convexity of the domain space.

Let $V \subset L^1(\mu)_+$ be a relatively weakly compact set of weights. We are interested in showing the existence of $W$, a conjugate family of weights for $V$, with respect to a given operator $T:S(\nu) \to \cap_{v \in V} L^p(v d \mu)$, were $S(\nu)$ is the space of $\nu$-a.e. equal simple functions.  Essentially, we have two different situations that should be taken into account:

\begin{itemize}

\item[(A)] The operator $T$ is defined in a Banach function space $X(\nu)$ and we require that $X(\nu) \subseteq L^p(w d \nu)$ uniformly for $w \in W$. In this case, we will show that under the adequate requirements, we can find such a  set $W\subset (X(\nu)_{[q]})'$.

\item[(B)] When there is no a priori assumption on $\cap_{w \in W} L^p(w d \nu)$ being included into some Banach function space, it will be shown that in fact one can always find an explicit Banach function space $Y(\nu)$ such that $W \subset B_{Y(\nu)}$.

\end{itemize}

Let us start with the formal definition of a conjugate family of weights $W$ of a given class of weights $V$ with respect to a given operator $T:X(\nu) \to \cap_{v \in V} L^p(v d \mu)$.

\begin{definition}
Given measure spaces $(\Omega_1,\mu_1)$, $(\Omega_2,\mu_2)$, a set $V\subset L^1(\mu_2)_+$ and an operator $T: X(\mu_1) \to L^p(vd\mu_2)$ such that
$$
\sup_{v\in V} \|T\|_{X(\mu_1) \to L^p(vd\mu_2)}<\infty,
$$
we say that $W\subset L^1(\mu_1)_+$ is a \textit{conjugate family of weights} for $V$ (with respect to $T$) if for every $v\in V$ there is $w \in W$ such that
\begin{enumerate}
\item the formal identity $i_w:X(\mu_1) \hookrightarrow L^p(w d \mu_1)$ is continuous,
\item $T:L^p(w d \mu_1)\rightarrow L^p(vd\mu_2)$ is continuous,
\end{enumerate}
with
$$
\sup_{w\in W} \|i_w\|<\infty,\quad\quad\text{and}\quad\sup_{v\in V} \|T\|_{L^p(w d\mu_1) \to L^p(vd\mu_2)}<\infty,
$$
where $w$ in the last supremum is determined in terms of $v$.
\end{definition}

For the case when $T$ is just defined on the set of simple functions over $(\Omega_1,\mu_1)$ and no boundedness is assumed, then $W\subset L_1(\mu_1)$ will be required to satisfy only (2) in the above definition.

\begin{example}
Consider a probability space $(\Omega,\Sigma,\mu)$, a disjoint partition $(A_i)_{i\in\mathbb N}$ of $\Omega$ (i.e. $A_i\cap A_j=\emptyset$ for $i\neq j$ and $\bigcup_{i\in\mathbb N} A_i=\Omega$) and let $T:L^\infty(\mu) \to L^2(\mu)$ be the operator given by
$$
T(f):= \sum_{i\in\mathbb N} \Big( {\int_{A_i} f \, d\mu} \Big) \chi_{A_i}.
$$
Take any set $V \subseteq B_{L^1(\mu)_+}$.
Since $\|f\|_{L^1(\mu)}\leq \|f\|_{L^2(\mu)}$ for $f\in L^2(\mu)$, for $v \in V$ we have
\begin{eqnarray*}
\|\sum_{i\in\mathbb N}  \Big(\int_{A_i} f \, d\mu \Big) \,  \chi_{A_i} \|_{L^2(v d \mu)}& = & \Big(\sum_{i\in\mathbb N} \Big({\int_{A_i} f \, d\mu} \Big)^2 \int_{A_i}  v d \mu \Big )^{1/2}\\
&\le& \sup_{i\in\mathbb N}  \Big|{\int_{A_i} f \, d\mu} \Big| \Big( \sum_{i\in\mathbb N} \int_{A_i}  v d \mu\Big)^{1/2}\\
&\le& \| f\|_{L^2(\mu)}.
\end{eqnarray*}
Therefore, $W=\{\chi_{\Omega} \}$ is a conjugate set of weights for $V$ with respect to $T$.

\end{example}

\vspace{0.5cm}

The existence of a conjugate family of weights for a given class of weights, in the sense we have written above, does not always hold, even when the family $V$ reduces to a single weight. In general, given an operator from a Banach function space $X(\mu)$ to an $L^p$-space, it may happen that there is no a conjugate weight allowing an extension of $T$ through a weighted $L^p$-space. Let us show an example of this situation.

\begin{example}
Consider a non-atomic probability  measure $\mu$ and take $1 \le p < q \le 2.$ Let  $T:L^q(\mu) \to L^p(\mu)$ be the isomorphic embedding given by \cite[Corollary 2.f.5]{LT2}.
Clearly, this operator does not preserve any lattice structure; actually, we will later show that it cannot be $p$-regular.

Suppose that there is a $\mu$-integrable function $g$ such that we can extend the operator $T$ in such a way that it is defined and continuous from $L^p(g d \mu)$ to $L^p(\mu).$ In particular, we have $L^q(\mu) \subseteq L^p(g d\mu),$ so there is a constant $C_1>0$ such  for every $f \in L^q(\mu)$ we have
\begin{equation}\label{a}
\Big( \int_{\Omega} |f|^p g d \mu \Big)^{1/p} \le C_1 \Big( \int_{\Omega} |f|^q d \mu \Big)^{1/q}.
\end{equation}

On the other hand, since $T$ is an isomorphic embedding, there is $K_1>0$ such that
\begin{equation}\label{b}
K_1 \Big( \int_{\Omega} |f|^q d \mu \Big)^{1/q} \le \Big( \int_{\Omega} |T(f)|^p d \mu \Big)^{1/p}.
\end{equation}
Moreover, by the continuity of the extension there is $C_2$ such that for $f\in L^q(\mu),$
\begin{equation}\label{c}
\Big( \int_{\Omega} |T(f)|^p d \mu \Big)^{1/p} \le C_2 \Big( \int_{\Omega} |f|^p g d \mu \Big)^{1/p}.
\end{equation}
Therefore, putting together \eqref{a}, \eqref{b} and \eqref{c} it follows that $\Big( \int_{\Omega} |f|^q d \mu \Big)^{1/q} $ and
$\Big( \int_{\Omega} |f|^p g d \mu \Big)^{1/p}$ are equivalent expressions for all $f \in L^q(\mu)$.

In particular, this implies that for every $n\in\mathbb N$ if we take  $(A^n_i)_{i=1}^n$ a partition of $\Omega$ such that $\mu(A^n_i)=1/n$ for all $i=1,\ldots,n,$ then
$$
\frac{1}{n^{1/q}}=\Big( \int_{\Omega} |\chi_{A^n_i}|^q d \mu \Big)^{1/q}\approx \Big( \int_{\Omega} |\chi_{A^n_i}|^pg d \mu \Big)^{1/p}=\Big( \int_{A^n_i} g d \mu \Big)^{1/p}.
$$
Thus, for every $n\in \mathbb N$
$$
\int_\Omega g d \mu = \sum_{i=1}^n \int_{A^n_i} g d \mu \approx n^{1-p/q},
$$
which is a contradiction with $g\in L^1(\mu)$ since $1-p/q >0$.
\end{example}

The previous example is the reason why we are interested in giving conditions for the existence of conjugate classes of weights. We will  show next how vector measures provide a helpful tool to prove the existence of a conjugate set of weights $W$ for a uniformly integrable set of weights $V$.

\begin{theorem} \label{vm}
Let $V$ be a relatively weakly compact set in $L^1(\mu)_+$, let $m$ be a countably additive vector measure $m: \Xi \to L^p(m_V)$ and let $\nu$ be a control measure for $m$.

The following assertions are equivalent.
\begin{itemize}
\item[(i)] The integration map $I_m: L^p(m) \to L^p(m_V)$ is $p$-regular.

\item[(ii)] There exists $W \subset B_{(L^1(m))'}$ which is a conjugate family of weights for $V$ with respect to $I_m$.
\end{itemize}

\end{theorem}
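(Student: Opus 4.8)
The plan is to derive both implications from Proposition \ref{pconvexop} applied to the integration operator $I_m:L^p(m)\to L^p(m_V)$, which is legitimate because both spaces are order continuous ($m_V$ being countably additive by the relative weak compactness of $V$) and $p$-convex with constant $1$. The bridge between the abstract statement of that proposition and the language of weights is the observation that among the positive functionals of $B_{(L^1(m_V))^*}=B_{(L^p(m_V)_{[p]})^*}$ one finds the family $\{\varphi_v\}_{v\in V}$ introduced above, and that this family is norming: for $g\in L^1(m_V)_+$ one has $\|g\|_{L^1(m_V)}=\sup_{v\in V}\varphi_v(g)$. Together with the elementary identities
$$
\langle |I_m(f)|^p,\varphi_v\rangle=\int_\Omega |I_m(f)|^p\,v\,d\mu=\|I_m(f)\|_{L^p(v\,d\mu)}^p,\qquad \langle |f|^p,z^*\rangle=\|f\|_{L^p(w\,d\nu)}^p,
$$
where $w\in L^1(\nu)_+$ is the weight over the control measure $\nu$ corresponding to a positive $z^*\in (L^p(m)_{[p]})^*=(L^1(m))'$, this converts the factorization of Proposition \ref{pconvexop} into weighted $p$-estimates for $I_m$.

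For (i) $\Rightarrow$ (ii), assuming $I_m$ is $p$-regular, I would apply Proposition \ref{pconvexop}(ii) with $y^*=\varphi_v$ for each $v\in V$; since the constant there is uniformly $\rho_p(I_m)$, this produces positive functionals $z_v^*\in B_{(L^1(m))'}$ with $\|I_m(f)\|_{L^p(v\,d\mu)}\le\rho_p(I_m)\,\|f\|_{L^p(w_v\,d\nu)}$ for all $f\in L^p(m)$, where $w_v$ is the weight attached to $z_v^*$. Setting $W:=\{w_v:v\in V\}\subset B_{(L^1(m))'}$, condition (1) of the definition of conjugate family is immediate from $\|z_v^*\|_{(L^1(m))'}\le 1$ together with $\||f|^p\|_{L^1(m)}=\|f\|_{L^p(m)}^p$, which gives $\|i_{w_v}\|\le 1$ uniformly in $v$; condition (2) follows because simple functions lie in $L^p(m)$ and are dense in $L^p(w_v\,d\nu)$ ($\nu$ being finite), so $I_m$ extends from them to a bounded operator $L^p(w_v\,d\nu)\to L^p(v\,d\mu)$ of norm at most $\rho_p(I_m)$, again uniformly.

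For (ii) $\Rightarrow$ (i), put $M:=\sup_{v\in V}\|I_m\|_{L^p(w_v\,d\nu)\to L^p(v\,d\mu)}$ and $N:=\sup_{w\in W}\|i_w\|$. Given $x_1,\dots,x_n\in L^p(m)$, the element $h:=\big(\sum_{i=1}^{n}|x_i|^p\big)^{1/p}$ again lies in $L^p(m)$, and for each $v\in V$, with $w=w_v$ its associated weight,
$$
\sum_{i=1}^{n}\|I_m x_i\|_{L^p(v\,d\mu)}^p\le M^p\sum_{i=1}^{n}\|x_i\|_{L^p(w\,d\nu)}^p=M^p\|h\|_{L^p(w\,d\nu)}^p\le (MN)^p\|h\|_{L^p(m)}^p.
$$
Taking the supremum over $v\in V$ and using $\big\|\big(\sum_{i}|I_m x_i|^p\big)^{1/p}\big\|_{L^p(m_V)}^p=\sup_{v\in V}\sum_{i}\|I_m x_i\|_{L^p(v\,d\mu)}^p$, which is exactly the defining property of the norm of $L^p(m_V)$, one obtains the $p$-regularity inequality \eqref{in2} for $I_m$ with constant $MN$.

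The genuinely delicate part is confined to (i) $\Rightarrow$ (ii): one must carry along the identifications $L^p(m_V)_{[p]}=L^1(m_V)$, $L^p(m)_{[p]}=L^1(m)$, the coincidence of $(L^1(m))^*$ with the K\"othe dual $(L^1(m))'$ and its realization as a space of weights over a control measure $\nu$ of $m$, and---for condition (2)---the fact that $I_m$, a priori only defined on $L^1(m)$, restricts compatibly to a continuous map on each weighted space $L^p(w_v\,d\nu)$ via the density of simple functions. Once these bookkeeping steps are in place, the equivalence is essentially a dictionary translation of Proposition \ref{pconvexop}, the only conceptual input being the norming role of the family $\{\varphi_v\}_{v\in V}$ in the dual of $L^1(m_V)$.
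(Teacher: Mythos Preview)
Your proposal is correct and follows essentially the same route as the paper: both directions are obtained from Proposition~\ref{pconvexop} (the paper writes Proposition~\ref{kyfan:lat}, but clearly means the $p$-regular version), using the identifications $L^p(m)_{[p]}=L^1(m)$, $L^p(m_V)_{[p]}=L^1(m_V)$, the norming role of $\{\varphi_v\}_{v\in V}$ in $(L^1(m_V))'$, and the Radon--Nikod\'ym passage from functionals in $(L^1(m))'$ to weights over the control measure $\nu$. The only cosmetic differences are that the paper takes $W=B_{(L^1(m))'_+}$ rather than your minimal set $\{w_v:v\in V\}$, and uses an $\varepsilon$-argument in (ii)$\Rightarrow$(i) where you take the supremum directly; neither affects the substance.
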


Before the proof, let us note the following

\begin{remark}
It is easy to check that a finitely additive vector measure $m: \Xi \to L^p(m_V)$ is countably additive if and only for every sequence of disjoint sets
$\{A_i\}_{i=1}^\infty$ on $\Xi$,
$$
\lim_n \sup_{v \in V} \big\| m( \cup_{i=n}^\infty A_i) \big\|_{L^p(v d \mu)} =0.
$$
\end{remark}

\begin{proof}
(i) $\Rightarrow$ (ii) Since $m:\Xi \to L^p(m_V)$ is a countably additive vector measure, the integration map $I_m: L^1(m) \to L^p(m_V)$ is continuous. Note that the formal identity $L^p(m) \hookrightarrow L^1(m)$ is also continuous, and therefore $I_m:L^p(m) \to L^p(m_V)$ is bounded.

Now, since by hypothesis $I_m$ is $p$-regular, Proposition \ref{kyfan:lat} yields that, for each $v \in V$, if we consider the (positive) functional  $\varphi_v(\cdot)= \int_\Omega \cdot \, v d\mu \in (L^p(m_V)_{[p]})' =(L^1(m_V))'$, then there is $h_v \in B_{(L^p(m)_{[p]})'}= B_{(L^1(m))'}$ such that
\begin{equation}\label{eq:domina}
\varphi_v(|I_m(f)|^p)\leq \rho_p(I_m)^p h_v(|f|^p),\quad f\in L_p(m).
\end{equation}

Note that the duality pairing in the spaces $L^1(m)$ can be written in terms of the integral with respect to a Rybakov measure $|\langle m,\phi\rangle|$ for $m$. Since by hypothesis $m \ll \nu$, the same holds for the scalar measure $|\langle m,\phi\rangle|$ and by Radon-Nikodym's theorem there is $h_\phi \in L^1(\nu)$ such that for every measurable set $A$ we have
$$
|\langle m,\phi\rangle|(A)=\int_A h_\phi d \nu.
$$
Thus, \eqref{eq:domina} can be written as
$$
\Big( \int |T(f)|^p \, v d \mu \Big)^{1/p} \le \rho_p(I_m) \Big( \int |f|^p \, h  d |\langle m,\phi\rangle| \Big)^{1/p} = \rho_p(I_m) \Big( \int |f|^p \, h \, h_\phi \,  d \nu)^{1/p}, \,\,
$$
for $ f \in L^p(m)$ and  every $v\in V$.

Therefore, the set $W=B_{(L^1(m))_+'}$  gives the conjugate set of weights. Note that this can be written as $W \cdot h_\phi$ if we want to consider it as a subset of  $L^1(\nu)$ using the Rybakov measure yielding the duality.

(ii) $\Rightarrow$ (i) First note that by definition of $m_V$, the set of functionals $\{\varphi_v: v \in V\} \subset B_{(L^1(m_V))'}$ satisfy
$$
\|f\|_{L^p(m_V)} = \sup_{v \in V } \varphi_v(|f|^p)^{1/p},\quad f\in L^p(m_V).
$$
Let $W\subset B_{(L^1(m))_+'}$ be a conjugate family of weights for $V$ with respect to $I_m$. Therefore, for every $g \in L^p(m)$ we have
$$
\|I_m(g)\|_{L^p(m_V)} = \sup_{v \in V } \varphi_v(|I_m(g)|^p)^{1/p} \le C \sup_{w \in W} w(|g|^p)^{1/p}\le C \|g\|_{L^p(m)}.
$$
Therefore, the operator $I_m$ is continuous. Now, in order to check $p$-regularity, take $f_1,\ldots,f_n \in L^p(m)$. Then, for every $\varepsilon >0$, there is a weight $v \in V$ and an associated weight $w \in W$ such that
\begin{eqnarray*}
\|( \sum_{i=1}^n |I_m(f_i)|^p )^{1/p} \|_{L^p(m_V)}^p &\le & \int_\Omega  \sum_{i=1}^n |I_m(f_i)|^p v d \mu +\varepsilon\\
&=& \sum_{i=1}^n\int_\Omega  |I_m(f_i)|^p v d \mu +\varepsilon\\
&\le & C^p \sum_{i=1}^n w(|f_i|^p) +\varepsilon \\
&\le& C^p \| ( \sum_{i=1}^n |f_i|^p )^{1/p} \|_{L^p(m)}^p +\varepsilon.
\end{eqnarray*}
Thus, $I_m$ is $p$-regular.
\end{proof}

\begin{remark}
Note that the $p$-regularity condition in (i) of Theorem \ref{vm} holds trivially when the measure $m$ is positive (which is equivalent to $I_m$ being a positive operator) or whenever $p=2$ (due to Krivine's version of Grothendieck's Theorem for operators between Banach lattices \cite[Theorem 1.f.14]{LT2}, \cite{Krivine}).
\end{remark}

\begin{remark}
It should be noted that the conjugate set of weights $W$ constructed in the implication (i) $\Rightarrow$ (ii) of Theorem \ref{vm} is actually a relatively weakly compact set in $L^1(\nu)_+$. Indeed, note that $L^1(m)$ is an order continuous Banach lattice \cite[Chapter 3]{libro} containing $L^\infty(\nu)$ for any control measure $\nu$. In particular, by \cite[Theorem 2.4.2]{MN} the formal identity $L^\infty(\nu)\hookrightarrow L^1(m)$ is weakly compact. By Gantmacher's theorem the adjoint $L^1(m)'\hookrightarrow L^1(\nu)$ is also weakly compact. Therefore, $W\subset B_{L^1(m)'}$ is a relatively weakly compact subset in $L^1(\nu)$.
\end{remark}

\begin{example}
Take $p\ge 1$, a  weakly compact set $V \subseteq L^1[0,1]_+$ and a kernel function $K:[0,1] \times [0,1] \to \mathbb R^+$ such that for every $y \in [0,1],$ the function $x \mapsto K(x,y)$ is integrable with respect to Lebesgue measure $d x$. Suppose also that for every
$v \in V$ and $A \in \mathcal B([0,1]),$ the function $y \mapsto \int_{A} K(x,y) dx$ belongs to $L^p(v d y).$
Consider also the vector measure $m_V$ and the space of $p$-integrable functions $L^p(m_V).$

Under these requirements, we can define the set function $m_K: \mathcal B([0,1]) \to L^p(m_V)$ given by
$$
m_K(A)(y):= \int_{A} K(x,y) dx, \quad A \in \mathcal B([0,1]), \,\, y \in [0,1].
$$

In order to be a vector measure, countable additivity might be required.
If $(A_n)$ is a sequence of disjoint measurable sets, we have that
$$
\lim_n \sup_{v \in V} \|m_K(\underset{m \ge n}{\overset{\infty} \bigcup} A_m)\|_{L^p(m_V)}
=
\lim_n \sup_{v \in V} \Big( \int_0^1 \Big( \int_{\underset{m \ge n}{\overset{\infty} \bigcup} A_m} K(x,y) dx \Big)^p v(y) dy \Big)^{1/p}
=0.
$$

Therefore, $m_K$ is a vector measure and the kernel operator ---defined as the integration map $I_{m_K}$--- is continuous from $L^1(m_K)$ to $L^p(m_V).$ Consequently,
it can be considered as an operator from $L^p(m_K)$ to $L^p(m_V).$

Fix a Rybakov measure $\mu_0= |\langle m_K, h \rangle|,$ where $h \in B_{(L^p(m_V))'},$
and consider the duality in $L^1(m_K)$ defined by integration with respect to $\mu_0.$
This measure is clearly absolutely continuous with respect to Lebesgue measure $dy$, and so
there is a function $h_0 = d \mu_0/ dy$. As an application
of Theorem \ref{vm}, we have that there is a constant $K$ such that for every $v \in V$ there is an element $g \in B_{(L^1(m_K))'}$ such that
$$
\Big( \int_0^1 \Big( \int_0^1 f(x) \, K(x,y) dx \Big)^p v(y) dy \Big)^{1/p}
\le K \Big( \int_0^1 |f(y)|^p \, v(y) h_0(y) \, dy \Big)^{1/p}.
$$
Therefore, the family $W= B_{(L^1(m_K))'}^+$ ---the non-negative functions of the unit ball of the dual space $(L^1(m_K))'$---, is the desired conjugate set of weights for $V$ with respect to the operator defined by the kernel $K.$
\end{example}

The vector measure approach given by Theorem \ref{vm} can be extended for operators as follows. Recall that given an operator $T:X(\nu) \to E$ ---where $X(\nu)$ is order continuous or just a space of simple functions---, we can always define a vector measure $m_T$ by $m_T(A):= T(\chi_A),$  $A \in \Sigma.$

Recall that such an operator is $p$-th power factorable is for every $f \in X(\nu),$ $\|T(f)\| \le K \| |f|^{1/p} \|^p_{X(\nu)}.$

For a Banach lattice $F$, a  subset $N \subseteq B^+_{F}$ is called positively norming if  the function
$\sup_{x^* \in N} \langle |x|, x^* \rangle$ is equivalent to the norm of $F$.
A basic characterization of such sets is the following lemma, that can be found in  \cite[Lemma 2.1]{sancheztradacetepositivelynorming}.

\begin{lemma} \label{lem2.1}
Let $X(\mu)$  be an order continuous Banach function space. For
a subset $N $ of $B^+_{X^*}$, the following statements are equivalent:
\begin{itemize}
\item[(1)] $N$ is positively norming.
\item[(2)] The pointwise product $S \cdot N$ is norming, where $S$ is the set of all functions as $\chi_A- \chi_{A^c}$, $ A \in \Sigma.$
\item[(3)] The w*-closed convex hull of $S  \cdot N$ contains a ball.
\end{itemize}
\end{lemma}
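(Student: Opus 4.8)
The plan is to prove the cyclic chain (1) $\Rightarrow$ (2) $\Rightarrow$ (3) $\Rightarrow$ (1). Two facts I would record first, to be used throughout, are consequences of the order continuity of $X(\mu)$: the dual $X^*$ coincides with the K\"othe dual $X(\mu)'$, so its elements are measurable functions and multiplication by the modulus-one function $\chi_A-\chi_{A^c}$ is an isometry of $X^*$; in particular $S\cdot N\subseteq B_{X^*}$. Moreover $S\cdot N$ is symmetric, since $-(\chi_A-\chi_{A^c})x^*=(\chi_{A^c}-\chi_{(A^c)^c})x^*\in S\cdot N$. I would also use the K\"othe duality pairing $\langle f,g\rangle=\int fg\,d\mu$ and the Hahn--Banach formula $\|x\|_X=\sup\{|\langle x,g\rangle|:g\in B_{X^*}\}$.

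For (1) $\Rightarrow$ (2): given $x\in X(\mu)$, take $A=\{x\ge 0\}$, so that $(\chi_A-\chi_{A^c})x=|x|$ and hence $\langle x,(\chi_A-\chi_{A^c})x^*\rangle=\langle|x|,x^*\rangle$ for every $x^*\in N$. Taking the supremum over $N$ and using that $N$ is positively norming gives $\sup_{\phi\in S\cdot N}\langle x,\phi\rangle\ge c\|x\|$ for a fixed $c>0$, while $\sup_{\phi\in S\cdot N}|\langle x,\phi\rangle|\le\|x\|$ is immediate from $S\cdot N\subseteq B_{X^*}$; so $S\cdot N$ is norming.

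For (2) $\Rightarrow$ (3): write $K=S\cdot N$ and suppose $\|x\|\le C\sup_{\phi\in K}|\langle x,\phi\rangle|$ for all $x$. Form the absolute polar $K^{\circ}=\{x\in X(\mu):|\langle x,\phi\rangle|\le 1\ \forall\,\phi\in K\}$ in the dual pair $(X(\mu),X^*)$; the norming hypothesis gives $K^{\circ}\subseteq C\,B_{X(\mu)}$. Taking polars again, the bipolar theorem (with $X^*$ carrying its weak$^*$ topology) yields $K^{\circ\circ}\supseteq(C\,B_{X(\mu)})^{\circ}=\tfrac1C B_{X^*}$, where $K^{\circ\circ}$ is the weak$^*$-closed absolutely convex hull of $K$. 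Since $K$ is symmetric, $0\in\mathrm{co}(K)$ and its absolutely convex hull coincides with $\mathrm{co}(K)$; hence $\overline{\mathrm{co}}^{w^*}(S\cdot N)\supseteq\tfrac1C B_{X^*}$, which is (3).

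For (3) $\Rightarrow$ (1): assume $\overline{\mathrm{co}}^{w^*}(S\cdot N)\supseteq\delta\,B_{X^*}$ for some $\delta>0$, and fix $x\in X(\mu)$. Since $\psi\mapsto|\langle x,\psi\rangle|$ is weak$^*$-continuous and convex, its supremum over $\overline{\mathrm{co}}^{w^*}(S\cdot N)$ equals its supremum over $S\cdot N$, namely $\sup_{A\in\Sigma,\,x^*\in N}|\int(\chi_A-\chi_{A^c})x\,x^*\,d\mu|$. Combining $\delta\,B_{X^*}\subseteq\overline{\mathrm{co}}^{w^*}(S\cdot N)$ with the estimate $|\int(\chi_A-\chi_{A^c})x\,x^*\,d\mu|\le\int|x|x^*\,d\mu=\langle|x|,x^*\rangle$ (where positivity $x^*\ge 0$ is essential) gives $\delta\|x\|\le\sup_{x^*\in N}\langle|x|,x^*\rangle$; the reverse bound $\langle|x|,x^*\rangle\le\|x\|$ is clear, so $N$ is positively norming. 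The step I expect to require the most care is (2) $\Rightarrow$ (3): setting up the correct dual pair (which is precisely where order continuity enters, through $X^*=X(\mu)'$) and combining the bipolar theorem with the symmetry of $S\cdot N$ to replace the absolutely convex hull by the convex hull; the remaining implications are routine manipulations of the K\"othe pairing.
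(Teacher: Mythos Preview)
The paper does not prove this lemma; it simply quotes it from \cite[Lemma~2.1]{sancheztradacetepositivelynorming}, so there is no in-paper proof to compare against. Your cyclic argument $(1)\Rightarrow(2)\Rightarrow(3)\Rightarrow(1)$ is correct: the identity $(\chi_A-\chi_{A^c})x=|x|$ for $A=\{x\ge0\}$ handles $(1)\Rightarrow(2)$, the bipolar theorem together with the symmetry of $S\cdot N$ gives $(2)\Rightarrow(3)$, and the pointwise bound $|\langle x,(\chi_A-\chi_{A^c})x^*\rangle|\le\langle|x|,x^*\rangle$ (valid because $x^*\ge0$) gives $(3)\Rightarrow(1)$.
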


\begin{theorem}  \label{pth}
Let $V$ be a relatively weakly compact set in $L^1(\mu)_+$. Let $X(\nu)$ be an order continuous Banach function space and consider an operator $T:X(\nu)\to L^p(v d\mu)$ such that $\sup_{v\in V}\|T\|_{X(\nu)\to L^p(v d\mu)}<\infty$. The following assertions are equivalent.
\begin{itemize}

\item[(i)] The operator $T:X(\nu) \to L^p(m_V)$ is well-defined, $p$-th power factorable and the associated extension $\tilde T:L^p(m_T)\to L^p(m_V)$ is $p$-regular.

\item[(ii)]
There is a positively norming set of  weights $W \subseteq B_{(L^1(m_T))'}  \subseteq K B_{(X(\nu))'}$ conjugate to $V$ with respect tot $T$.

\end{itemize}
\end{theorem}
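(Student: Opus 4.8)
The plan is to reduce the statement to Theorem~\ref{vm} applied to the vector measure $m=m_T$, with $\Xi$ the $\sigma$-algebra underlying $X(\nu)$ and $\nu$ itself as a control measure. I would first assemble the standard facts: the standing hypothesis $\sup_{v\in V}\|T\|_{X(\nu)\to L^p(v\,d\mu)}<\infty$ makes $T\colon X(\nu)\to L^p(m_V)$ bounded, and order continuity of $X(\nu)$ forces $m_T$ to be a countably additive vector measure into $L^p(m_V)$ with $\nu$ as a control measure. The optimal domain description of operators (see~\cite[Ch.~5]{libro}) then provides a continuous inclusion $X(\nu)\hookrightarrow L^1(m_T)$ of norm at most $\|T\|$ with $I_{m_T}|_{X(\nu)}=T$; by order continuity and duality---identifying duals with K\"othe duals and realising the duality of $L^1(m_T)$ through integration against a Rybakov measure $\mu_0=h_0\,d\nu$ exactly as in the proof of Theorem~\ref{vm}---this yields $B_{(L^1(m_T))'}\subseteq\|T\|\,B_{(X(\nu))'}$, the inclusion recorded in~(ii) with $K=\|T\|$. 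I would also use the fact, valid for order continuous $X(\nu)$, that $T$ is $p$-th power factorable if and only if $X(\nu)\subseteq L^p(m_T)$ continuously, in which case $\tilde T$ is the restriction of $I_{m_T}$ to $L^p(m_T)$ and extends $T$.

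For the implication (i)$\Rightarrow$(ii): since $\tilde T=I_{m_T}\colon L^p(m_T)\to L^p(m_V)$ is $p$-regular, Theorem~\ref{vm} with $m=m_T$ produces $W=B_{(L^1(m_T))'_+}$ as a conjugate family for $V$ with respect to $I_{m_T}$. This $W$ is obviously positively norming for $L^1(m_T)$, and by the setup it lies inside $\|T\|\,B_{(X(\nu))'}$, so the only thing left is to transfer conjugacy from $I_{m_T}$ to $T$. Given $v\in V$ with associated $w\in W$, the first condition for $T$ follows by composing the fixed-norm inclusion $X(\nu)\hookrightarrow L^p(m_T)$ with the uniformly bounded inclusion $L^p(m_T)\hookrightarrow L^p(w\,d\nu)$, and the second follows because the continuous extension of $T$ to $L^p(w\,d\nu)$ coincides with that of $I_{m_T}$---both being the extension of $I_{m_T}$ off the simple functions, which are dense in $L^p(m_T)$ and in $L^p(w\,d\nu)$---which maps boundedly into $L^p(v\,d\mu)$; all uniform bounds are inherited.

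For the implication (ii)$\Rightarrow$(i): the first step is to extract $p$-th power factorability, i.e. the continuous inclusion $X(\nu)\subseteq L^p(m_T)$. For a simple function $s\in X(\nu)\cap L^1(m_T)$ and $M=\sup_{w\in W}\|i_w\|$, the first condition of the conjugate family gives $\langle|s|^p,w\rangle=\|s\|_{L^p(w\,d\nu)}^p\le M^p\,\|s\|_{X(\nu)}^p$ for every $w\in W$; since $W\subseteq B_{(L^1(m_T))'}$ is positively norming for $L^1(m_T)$, taking the supremum over $w$ bounds $\bigl\||s|^p\bigr\|_{L^1(m_T)}=\|s\|_{L^p(m_T)}^p$ by a constant multiple of $\|s\|_{X(\nu)}^p$. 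As simple functions are dense in $X(\nu)$, the space $L^p(m_T)$ is complete, and convergence in either space implies convergence in measure against $\mu_0\ll\nu$, this bound extends to all $f\in X(\nu)$ with the extension being the identity; hence $X(\nu)\subseteq L^p(m_T)$ and $\tilde T=I_{m_T}|_{L^p(m_T)}$ is well defined. Next I would check that the given $W$ is also a conjugate family for $V$ with respect to $I_{m_T}$: the first condition, now $L^p(m_T)\hookrightarrow L^p(w\,d\nu)$, holds with norm at most $1$ because for simple $s$ one has $\|s\|_{L^p(w\,d\nu)}^p=\langle|s|^p,w\rangle\le\bigl\||s|^p\bigr\|_{L^1(m_T)}=\|s\|_{L^p(m_T)}^p$ and simple functions are dense; the second condition is inherited from that for $T$, the two extensions agreeing on simple functions. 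Theorem~\ref{vm} then gives that $I_{m_T}=\tilde T$ is $p$-regular, which together with the $p$-th power factorability already obtained is exactly~(i).

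The delicate point is the extraction of $p$-th power factorability in (ii)$\Rightarrow$(i): the conjugate family controls only the weighted $L^p$-norms of functions in $X(\nu)$, and converting this into control of the vector measure norm $\|\cdot\|_{L^p(m_T)}$ is precisely where the positively norming hypothesis on $W$ enters, supplemented by a completeness argument to land in $L^p(m_T)$ itself rather than merely in its space of weakly $p$-integrable functions. The remaining steps are routine transfer along the inclusions $X(\nu)\hookrightarrow L^p(m_T)\hookrightarrow L^p(w\,d\nu)$ together with the integration-map formalism already developed for Theorem~\ref{vm}.
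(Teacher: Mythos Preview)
Your proposal is correct and follows essentially the same route as the paper's proof. The only cosmetic difference is that in (i)$\Rightarrow$(ii) the paper applies Proposition~\ref{kyfan:reg} directly to the $p$-regular extension $\tilde T$ to produce, for each $v\in V$, a weight $w\in B_{(L^1(m_T))'}$, whereas you package this step as an invocation of Theorem~\ref{vm}; since Theorem~\ref{vm} is itself proved via that same separation argument, the two are equivalent, and your (ii)$\Rightarrow$(i) matches the paper's argument (positively norming $\Rightarrow$ $X(\nu)\subseteq L^p(m_T)$, then Theorem~\ref{vm} for $p$-regularity) essentially verbatim, only with more detail on the density/completeness step.
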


\begin{proof}
(i) $\Rightarrow$ (ii) If $T$ is well-defined on $L^p(m_V)$, by the definition of $m_V$ we have seen already that it is well-defined into each $L^p(v d\mu)$ with a uniform bound for $v\in V$. If $T$ is also $p$-th power factorable, then we have that $X(\mu) \subseteq L^p(m_T)$ and the commutative diagram
$$
\xymatrix{
X(\nu) \ar[rr]^{T} \ar@{.>}[rd]_{i} & &   L^p(m_V) \\
& L^p(m_T)\ar@{.>}[ru]_{\tilde T} &
}
$$
Thus, there are constants $C>0$ and $C'>0$ such that
$$
\sup_{v \in V} \| T(f)\|_{L^p(v d \mu)} \le C \|f\|_{L^p(m_T)} \le C' \|f\|_{X(\mu)}, \quad f \in X(\mu).
$$
Now, considering the extended operator $\tilde T:L^p(m_T) \to L^p(m_V)$, since both spaces involved are order continuous and $p$-convex, Proposition \ref{kyfan:reg} gives the domination property that is needed: for each $v \in V$ there is $w \in B_{((L^p(m_T))_{[q]})'}= B_{(L^1(m_T))'}=W$ satisfying
$$
\int |T(f)|^p v d \mu \le \rho_p(\tilde T)^p \, \int |f|^p w d \nu, \quad f \in L^p(m_T).
$$
The density of simple functions in $L^1(\nu)$, gives the result.

(ii) $\Rightarrow$ (i) We need to prove first that $T$ is well-defined as an operator from $L^p(m_T)$ to $L^p(m_V)$. We know that it is well-defined in each component. Fix $v \in V$. Then there exists  $w \in W$ and a constant $K$ (independent of $v$ and $w$) such that
$$
\| T(f)\|_{L^p(v d \mu)} \le K \| f \|_{L^p(w d \nu)}, f \in X(\nu).
$$
Since this happens for every $v \in V$, and taking into account that $W\subset B_{(L^1(m_T))'}$, we obtain for every $f \in X(\nu)$,
$$
\|T(f)\|_{L^p(m_V)} = \sup_{v \in V} \|T(f)\|_{L^p(v d \mu)} \le K \sup_{w \in W} \|f\|_{L^p(w d\nu)} \le K
\|f\|_{L^p(m_T)}.
$$
Therefore the operator is well-defined.

Let us see now that $p$-th power factorable. Since we have that
$X(\nu) \subset L^p(w d \nu)$ for every $w \in W$ and the set of  weights is positively norming and norm bounded in $L^1(m_T)'$, we have that
$X(\nu) \subseteq L^p(m_T)$. The inclusion is continuous (the norms of the extended operators are uniformly bounded) so we have that $T$ is $p$-power factorable (Th.5.11 in \cite{libro}).

Finally, the $p$-regularity of the extension $\tilde T:L^p(m_T)\to L^p(m_V)$ follows from Theorem \ref{vm}.
\end{proof}

\begin{remark}
As in Theorem \ref{vm}, the $p$-regularity for the extension  can be removed from the statement of (ii) if $T$ is for example positive or when $p=2$.
\end{remark}

\end{document}